\newcommand{\Q}{\mathbb{Q}}
\newcommand{\Z}{\mathbb{Z}}
\newcommand{\F}{\mathbb{F}}
\DeclareMathOperator{\Gal}{Gal}
\DeclareMathOperator{\Frob}{Frob}
\DeclareMathOperator{\Nrd}{Nrd}
\DeclareMathOperator{\Trd}{Trd}
\DeclareMathOperator{\kar}{char}
\DeclareMathOperator{\trace}{Tr}
\DeclareMathOperator{\norm}{N}
\newtheorem{theorem}{Theorem}[section]
\newtheorem*{theorem*}{Theorem}
\newtheorem{proposition}[theorem]{Proposition}
\newtheorem*{proposition*}{Proposition}
\newtheorem{lemma}[theorem]{Lemma}
\newtheorem{corollary}[theorem]{Corollary}
\newtheorem*{corollary*}{Corollary}
\theoremstyle{definition}
\newtheorem{definition}[theorem]{Definition}
\theoremstyle{remark}
\newtheorem{remark}[theorem]{Remark}
\title{Irreducibility of Polynomials over Global Fields is Diophantine}
\author{Philip Dittmann}
\address{Mathematical Institute, Woodstock Road, Oxford, OX2 6GG}
\email{dittmann@maths.ox.ac.uk}
\date{19 October 2017}
\subjclass[2010]{Primary 11U05; Secondary 11R52}
\keywords{diophantine set, definability, central simple algebra}
\thanks{The research for this article was supported by the University of Oxford Clarendon Fund and Merton College Oxford.}
\begin{document}
\maketitle

\section{Statement of results}

We generalise methods of Poonen (\cite{poonenUniversalExistential}) and Koenigsmann (\cite{definingZinQ}) to prove the following theorem.
\begin{theorem*}
  Let $K$ be a global field, i.e.\ a number field or a function field in one variable over a finite field, and $n > 0$ a positive integer. 
  Then the set \[ \{ (a_0, \dotsc, a_{n-1}) \in K^n \colon X^n + a_{n-1}X^{n-1} + \dotsb + a_0 \in K[X] \text{ has no zero in $K$} \} \] is diophantine.
\end{theorem*}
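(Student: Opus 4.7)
The plan is to reduce the theorem to a diophantine characterization of irreducibility (as the title of the paper suggests), and then to prove that, for each fixed $d$, the set of monic irreducible degree-$d$ polynomials in $K[X]$ is diophantine using central simple algebras.

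\emph{Reduction step.} A monic polynomial $f \in K[X]$ of degree $n$ has no root in $K$ if and only if no irreducible factor of $f$ is linear, which is equivalent to the existence of a decomposition $n = d_1 + \cdots + d_k$ with $d_i \geq 2$ together with monic irreducible $g_i \in K[X]$ of degree $d_i$ satisfying $f = g_1 \cdots g_k$. Since $n$ admits only finitely many such decompositions, and the equation $f = \prod g_i$ is polynomial in the coefficients of the $g_i$, this is a finite disjunction of existential formulas. Assuming that for each $d \geq 2$ the set $I_d \subseteq K^d$ of coefficient tuples $(b_0, \ldots, b_{d-1})$ corresponding to a monic irreducible $X^d + b_{d-1} X^{d-1} + \cdots + b_0$ is diophantine, the set in the theorem is then diophantine by standard closure properties (existential quantification, finite Boolean combinations).

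\emph{Irreducibility via central simple algebras.} To show $I_d$ is diophantine, I would adapt the central-simple-algebra strategy of Poonen and Koenigsmann. The key observation is that $f$ of degree $d$ is irreducible if and only if $L = K[X]/(f)$ is a field extension of $K$ of degree $d$. Over a global field, such an $L$ embeds (by Albert--Brauer--Hasse--Noether) as a maximal subfield of some central simple division $K$-algebra $A$ of degree $d$. One can existentially quantify over central simple $K$-algebras $A$ of dimension $d^2$ via structure constants, and over embeddings $L \hookrightarrow A$; what remains is a diophantine expression of "$A$ is a division algebra".

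\emph{The main obstacle} is precisely this last condition, which is a priori universal. Following \cite{poonenUniversalExistential} and \cite{definingZinQ}, the remedy is the Hasse local-global principle for central simple algebras: $A$ is determined by its local invariants at all places of $K$, and being a division algebra translates into a nontriviality condition on those invariants. These are in turn detectable via representability by reduced norm forms of cyclic subalgebras, combined with the Hasse norm theorem (an element is a norm from a cyclic extension if and only if it is a local norm at every place). Converting the resulting local conditions into a single diophantine formula—using Hilbert symbols, approximation at finitely many places, and uniformly over number fields and function fields, including positive characteristic—is where I expect the bulk of the technical work to lie.
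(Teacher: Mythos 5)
Your reduction step is sound: ``$f$ has no root'' is equivalent to a finite disjunction over factorisations into irreducibles of degree $\geq 2$, so it suffices to make monic irreducibility of each fixed degree $d \geq 2$ diophantine. But the proof you sketch for that latter claim has a concrete gap. You assert that any degree-$d$ field extension $L = K[X]/(f)$ of a global field $K$ embeds as a maximal subfield of a central division $K$-algebra $A$ of degree $d$, citing Albert--Brauer--Hasse--Noether. This is false in general: the property you need is that $L$ be \emph{$K$-adequate} in the sense of Schacher, and there are degree-$d$ extensions of global fields that are not adequate whenever $d$ is not prime. For instance, $\Q(\sqrt 2,\sqrt 3,\sqrt 5)/\Q$ has degree $8$, but its Galois group $(\Z/2\Z)^3$ is not metacyclic, so by Schacher's criterion it is the maximal subfield of no division algebra of index $8$ over $\Q$. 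So the certificate you want to exhibit simply may not exist. (There is a second, acknowledged, gap: even when $A$ exists, ``$A$ is a division algebra'' is a priori a universal condition, and your proposed local-global remedy involves conditions at infinitely many places, which is not obviously diophantine either.)

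The paper avoids both problems by not trying to define irreducibility of degree-$d$ polynomials directly. It first passes, via the Fein--Kantor--Schacher theorem, from the composite degree $n$ to an \emph{admissible prime} $l \mid n$ for which a positive-density set of primes of $K$ has all residue degrees in $L$ divisible by $l$; working with prime degree $l$ sidesteps the adequacy obstruction entirely. It then uses class field theory (a ray class group modulo a suitable modulus $\mathfrak m$, Chebotarev, and a Grunwald--Wang-type existence theorem) to fix once and for all a finite family of degree-$l$ cyclic extensions $M_i/K$ and parameters, and builds a \emph{positive existential} sentence $\psi_{K,n}$ in the language of pairs $(L,K)$ that asserts the existence of $a\in K^\times$ with prescribed ideal-theoretic behaviour and with $a, a^{-1}$ lying in the semi-local sets $T((M_i,\sigma_i,a)\otimes_K L/L)$. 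Proposition \ref{propositionCriterionForProperExtension} shows $\psi_{K,n}$ fails in $(K,K)$ but holds in $(L,K)$ for any degree-$n$ field extension $L$. The key point your plan misses is how to handle the case that $K[X]/(f)$ is not a field: the paper exploits that positive existential sentences are preserved under ring homomorphisms, so a root $x$ of $f$ gives a surjection $K[X]/(f)\twoheadrightarrow K$ and forces $\psi_{K,n}$ to fail. The recursion in Lemma \ref{lemmaCriterionForNoRoot} then closes the proof. Finally, the diophantine criterion for \emph{irreducibility} is obtained in the paper only as a corollary, non-constructively, via the {\L}o{\'s}--Tarski preservation theorem; it is not the engine of the argument as in your plan.
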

As usual, a subset $A \subseteq K^n$ is called \emph{diophantine} if there exist $m \geq 0$ and $F \in K[X_1, \dotsc, X_n, Y_1, \dotsc, Y_m]$ such that \[ A = \{ (x_1, \dotsc, x_n) \in K^n \colon \exists y_1, \dotsc, y_m \in K (F(x_1, \dotsc, x_n, y_1, \dotsc, y_m) = 0) \}. \]
Equivalently, diophantine subsets of $K^n$ are exactly those that are definable by an existential first-order formula (with parameters) in the language of rings (equivalently, a positive existential first-order formula), and we will frequently adopt this viewpoint, in particular when seeking uniformity between different fields.

In the theorem, the construction of the polynomial $F$ -- or equivalently the defining first-order formula -- is explicit in principle, although we have not taken care to optimise the number of variables (quantifiers) necessary.

As an immediate corollary to the theorem we obtain:
\begin{corollary*}
  For every global field $K$ and $n>0$, the set of non-$n$-th powers in $K$ is diophantine.
\end{corollary*}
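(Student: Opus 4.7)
The plan is simply to specialise the theorem by substitution. An element $a \in K$ fails to be an $n$-th power if and only if the polynomial $X^n - a \in K[X]$ has no zero in $K$. In the normalisation used in the theorem, this polynomial corresponds to the coefficient tuple $(a_0, a_1, \dotsc, a_{n-1}) = (-a, 0, \dotsc, 0)$.

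By the theorem, there is a positive existential formula $\phi(a_0, \dotsc, a_{n-1})$ (possibly with parameters) in the language of rings that defines, for each global field $K$, the set of coefficient tuples corresponding to monic degree-$n$ polynomials with no $K$-rational zero. Substituting the constants $a_0 \mapsto -a$ and $a_i \mapsto 0$ for $1 \leq i \leq n-1$ produces a positive existential formula $\phi(-a, 0, \dotsc, 0)$ in the single free variable $a$; this formula defines precisely the set of non-$n$-th powers in $K$.

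Since plugging constants into a positive existential formula yields another positive existential formula (one simply carries the same existentially quantified variables $y_1, \dotsc, y_m$ and the same polynomial $F$, with the relevant inputs fixed), the resulting set is diophantine in the sense defined in the excerpt. There is no real obstacle here: the only content is the elementary observation that being an $n$-th power is equivalent to the existence of a $K$-rational root of $X^n - a$, after which the corollary reduces to a substitution of constants into the diophantine definition provided by the theorem.
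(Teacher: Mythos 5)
Your proof is correct and is exactly the intended one: the paper treats this as an immediate corollary of the main theorem, and your specialisation to the coefficient tuple $(-a, 0, \dotsc, 0)$ for the polynomial $X^n - a$, together with the observation that substituting constants into a positive existential formula yields another one, is precisely the argument.
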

This was previously proven in \cite{colliotTheleneVanGeelNthPowers} in the case of a number field.
Translating the theorem into the terminology of mathematical logic yields the following:
\begin{corollary*}
  Let $K$ be a global field and $K^{\ast\ast} \supseteq K^\ast$ any two fields which are both elementary extensions of $K$. Then $K^\ast$ is relatively algebraically closed in $K^{\ast\ast}$.
\end{corollary*}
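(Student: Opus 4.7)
The plan is a short contradiction argument combining the preceding theorem with the standard fact that positive existential formulas are preserved under inclusion of structures. Suppose, aiming for a contradiction, that some $\alpha \in K^{**}$ is algebraic over $K^*$ but not contained in $K^*$, and let $f(X) = X^n + a_{n-1}X^{n-1} + \dotsb + a_0 \in K^*[X]$ be its minimal polynomial. Then $n \geq 2$, and since $f$ is irreducible over $K^*$ of degree at least two, it has no zero in $K^*$.

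By the theorem, the set of coefficient tuples of monic degree-$n$ polynomials without a zero in $K$ is diophantine, hence defined by some positive existential formula $\phi(x_0, \dotsc, x_{n-1})$ in the language of rings with parameters in $K$. The assertion that $\phi$ captures this set, namely
\[ \forall x_0, \dotsc, x_{n-1} \colon \phi(x_0, \dotsc, x_{n-1}) \leftrightarrow \forall y \bigl( y^n + x_{n-1} y^{n-1} + \dotsb + x_0 \neq 0 \bigr), \]
is a first-order sentence whose parameters lie in $K$. Since both $K^*$ and $K^{**}$ are elementary extensions of $K$, this sentence holds in each of them, so $\phi$ defines the analogous ``no zero'' set in $K^*$ and in $K^{**}$. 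Because $f$ has no zero in $K^*$ we deduce $K^* \models \phi(a_0, \dotsc, a_{n-1})$; as $K^* \subseteq K^{**}$ and $\phi$ is positive existential, the witnesses from $K^*$ remain valid in $K^{**}$, giving $K^{**} \models \phi(a_0, \dotsc, a_{n-1})$. Applying the defining property of $\phi$ in $K^{**}$, we conclude that $f$ has no zero in $K^{**}$, contradicting the fact that $\alpha$ is a root of $f$.

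No genuine obstacle is anticipated, since the whole substantive content lies in the theorem. The only conceptual point worth stressing is that $K^{**}$ need not be an elementary extension of $K^*$, so the symmetry must be broken by routing the ``$\phi$ defines the no-zero set'' equivalence through the common ground field $K$ via elementary equivalence, while using the direct inclusion $K^* \subseteq K^{**}$ only where it is actually needed, namely for the upward preservation of the positive existential witness.
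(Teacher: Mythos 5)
Your proof is correct and follows essentially the same route as the paper: transfer the defining equivalence for the no-zero set from $K$ to $K^*$ and $K^{**}$ by elementarity, then push the existential witness upward along $K^* \subseteq K^{**}$. The paper compresses this into ``by first-order transfer'' and states the conclusion for arbitrary rootless $f$ rather than framing it as a contradiction via a minimal polynomial, but your more explicit remark about why the symmetry must be broken through $K$ (since $K^{**}$ need not be elementary over $K^*$) is exactly the point being made.
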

This answers Question 25 in \cite{definingZinQ}.
By a simple model-theoretic argument we also obtain:
\begin{corollary*}
  Let $K$ be a global field. There exists a diophantine criterion for a polynomial over $K$ in an arbitrary number of variables to be irreducible. More formally, fix $r,d \geq 0$. 
Then the set \begin{multline*} \{ (a_{i_1, \dotsc, i_r})_{0 \leq i_1, \dotsc, i_r \leq d} \in K^{(d+1)^r} \colon \\ \sum_{0 \leq i_1, \dotsc, i_r \leq d} a_{i_1, \dotsc, i_r} X_1^{i_1} \dotsm X_r^{i_r} \in K[X_1, \dotsc, X_r] \text{ is irreducible} \} \end{multline*} is diophantine.
\end{corollary*}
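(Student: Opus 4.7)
The plan is to apply the \L{}o\'{s}--Tarski preservation theorem to the first-order formula $\phi(\bar a)$ expressing that the polynomial with coefficients $(a_{i_1,\dotsc,i_r})$ is irreducible in $K[X_1,\dotsc,X_r]$. Reducibility is visibly existential (quantify over a pair of non-unit factors), so it suffices to show that irreducibility is preserved under embeddings between arbitrary models of $T := \mathrm{Th}(K)$; \L{}o\'{s}--Tarski then produces an existential formula equivalent to $\phi$ modulo $T$, which defines the required diophantine set in $K$.

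First, I would upgrade the excerpt's second corollary: for any inclusion $K^* \subseteq K^{**}$ of models of $T$ (not merely elementary extensions of $K$), $K^*$ is relatively algebraically closed in $K^{**}$. The argument used for elementary extensions of $K$ transfers with only cosmetic changes, using preservation of existential formulas under embeddings together with the fact that the diophantine definition supplied by the main theorem defines ``has no root'' in every model of $T$ by elementary equivalence.

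Second, a standard linear-disjointness argument yields that if $F$ is relatively algebraically closed in $L$, then $F(Y_1,\dotsc,Y_s)$ is relatively algebraically closed in $L(Y_1,\dotsc,Y_s)$. Given now $K^* \subseteq K^{**}$ models of $T$ and $f \in K^*[X_1,\dotsc,X_r]$ irreducible, permuting variables we may assume $f$ depends on $X_r$, and Gauss's lemma gives that $f$ is irreducible in $K^*(X_1,\dotsc,X_{r-1})[X_r]$ and primitive in $X_r$ over $K^*[X_1,\dotsc,X_{r-1}]$. A hypothetical nontrivial factorisation $f = gh$ in $K^{**}[X_1,\dotsc,X_r]$ must have both factors of positive $X_r$-degree (else primitivity fails), and thereby descends to $K^{**}(X_1,\dotsc,X_{r-1})[X_r]$. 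The coefficients of $g$ are elementary symmetric functions in some roots of $f$ in $\overline{K^*(X_1,\dotsc,X_{r-1})}$, hence algebraic over $K^*(X_1,\dotsc,X_{r-1})$, and by the extended relative algebraic closure they already lie in $K^*(X_1,\dotsc,X_{r-1})$, contradicting the irreducibility of $f$ over $K^*$.

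The principal subtlety is not any single calculation but stitching the pieces together correctly: extending relative algebraic closure from elementary extensions of $K$ to arbitrary inclusions of models of $T$ (as demanded by \L{}o\'{s}--Tarski), and lifting this property through purely transcendental extensions so that multivariate irreducibility can be reduced to the univariate case treated by the main theorem.
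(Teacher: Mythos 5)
Your skeleton is the same as the paper's: apply the \L{}o\'{s}--Tarski preservation theorem, then reduce to relative algebraic closedness of $K^*$ in $K^{**}$, which is the previous corollary. However there are two points to flag, one a genuine misstep and one a divergence of route.

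The misstep is the claimed need to ``upgrade'' relative algebraic closedness to arbitrary inclusions of models of $T = \mathrm{Th}(K)$. This is not what \L{}o\'{s}--Tarski demands here, and it is stronger than what you can deliver. The defining formula $\varphi_{K,n}$ of Theorem \ref{theoremMainThm} carries parameters from $K$; the form of \L{}o\'{s}--Tarski relevant to a formula with parameters from $K$ (equivalently, working in the language expanded by constants for $K$, so that the ambient theory is the elementary diagram of $K$) requires preservation only under embeddings between \emph{elementary extensions of} $K$ -- which is exactly the statement of the previous corollary, already proved. Trying to prove preservation across all models of the plain theory $\mathrm{Th}(K)$ would in effect require a parameter-free version of $\varphi_{K,n}$, which the construction does not supply: in a model $K^*$ that is merely elementarily equivalent to $K$, the transferred sentence $\exists \bar{p}\,\forall \bar{a}\,(\dotsb)$ gives witnesses $\bar{p}$ internal to $K^*$, and there is no reason these same $\bar{p}$ define the right set in a further extension $K^{**}$. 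So the ``upgrade'' is neither needed nor achieved; you should simply use the parametric \L{}o\'{s}--Tarski as in the paper.

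The multivariate reduction is a genuinely different (and correct, once the gaps are filled) route. You treat $f$ as a polynomial in $X_r$ over $K^*(X_1,\dotsc,X_{r-1})$, invoke Gauss's lemma, and then use that relative algebraic closedness is inherited by purely transcendental extensions (so $K^*(X_1,\dotsc,X_{r-1})$ is relatively algebraically closed in $K^{**}(X_1,\dotsc,X_{r-1})$). This works, but requires two not-entirely-trivial supporting facts that you gloss over: (i) the inheritance of relative algebraic closedness under purely transcendental extensions, and (ii) preservation of primitivity (content equal to $1$) under the base change $K^*[\mathbf{X}'] \to K^{**}[\mathbf{X}']$, which is what forces both factors to have positive $X_r$-degree -- a flatness or codimension argument is needed in more than one auxiliary variable, since B\'ezout is unavailable. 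The paper's proof sidesteps both facts: normalise constant coefficients to $1$, factor $f$ into irreducibles over $\overline{K^*}$, note that absolute irreducibility persists over $\overline{K^{**}}$, and compare the two factorisations of any hypothetical $f = g\cdot h$ in $\overline{K^{**}}[\mathbf{X}]$ by unique factorisation, concluding the coefficients of $g,h$ lie in $\overline{K^*} \cap K^{**} = K^*$. That argument needs only absolute irreducibility being a geometric property, which is cleaner; your route trades this for Gauss's lemma and a more delicate descent, at the cost of the two additional lemmas.
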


\subsection*{Acknowledgement.} I would like to thank my PhD supervisor, Jochen Koenigsmann, for helpful discussions on the subject of this paper and comments on the exposition.

\section{Preliminaries on central simple algebras}

In this section, we extend the methods pioneered in \cite{poonenUniversalExistential} from quaternion algebras over $\Q$ to general central simple algebras of prime degree over global fields.
We assume the reader to be familiar with the theory of central simple algebras; see for instance \cite{centralSimpleAlgebrasAndGalCohom} for an introduction.

Let $F$ be a field and $A$ a (finite-dimensional) central simple algebra over $F$. Define
 \[ S(A/F) = \{ \Trd(x) \colon x \in A, \Nrd(x) = 1 \}, \]
where $\operatorname{Trd}$ and $\operatorname{Nrd}$ are the reduced trace and norm, respectively.

\begin{proposition}\label{propositionLocalGlobalPrinciple}
  Let $L$ be a global field and $A$ a central simple algebra over $L$ of prime degree $l$. Then
		\[ S(A/L) = \bigcap_{v \text{ a place of } L} S(A \otimes L_v / L_v) \cap L. \]
\end{proposition}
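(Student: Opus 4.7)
The inclusion $S(A/L) \subseteq \bigcap_v S(A \otimes L_v/L_v) \cap L$ is immediate. For the reverse, fix $t \in L$ with $t \in S(A \otimes L_v/L_v)$ for every place $v$; I aim to produce $x \in A$ with $\Nrd(x) = 1$ and $\Trd(x) = t$. Two kinds of witnesses suggest themselves: (a) central witnesses, i.e.\ $\alpha \in L$ with $l\alpha = t$ and $\alpha^l = 1$ (which directly give $x = \alpha \in L \subset A$); or (b) non-central witnesses, obtained via a degree-$l$ field extension $K = L[X]/(f)$ for some monic irreducible $f(X) = X^l - tX^{l-1} + \dotsb + (-1)^l \in L[X]$ such that $K$ embeds in $A$ as a maximal subfield. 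In case (b) the image of $X \bmod f$ in $A$ has both minimal and reduced characteristic polynomial equal to $f$, yielding the required trace and norm.

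If (a) is achievable (including $t = 0$, $\alpha = 1$ when the characteristic of $L$ is $l$), we are done. Assume henceforth it is not. Let $S$ be the finite set of places of $L$ where $A$ ramifies. I claim that for each $v \in S$ one may choose a \emph{non-central} local witness $x_v \in A_v$: otherwise every witness at $v$ would be a scalar $\alpha_v \in L_v$ with $l\alpha_v = t$ and $\alpha_v^l = 1$, forcing $\alpha_v = t/l$ to lie in $L$ (a case which is vacuous in characteristic $l$) and to give a global central witness, contradicting the failure of (a). Since $A_v$ is a division algebra of prime degree $l$, such an $x_v$ generates a subfield of $A_v$ of degree exactly $l$ over $L_v$, and its reduced characteristic polynomial $f_v \in L_v[X]$ is irreducible of degree $l$ with $-t$ as coefficient of $X^{l-1}$ and $(-1)^l$ as constant term.

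To construct the global $f$, I use weak approximation for the free coefficients $c_1, \dotsc, c_{l-2}$: choosing them close in $L$ to the corresponding coefficients of the $f_v$ at every $v \in S$ yields, by Krasner's lemma, a polynomial $f \in L[X]$ that is irreducible over each $L_v$ with $v \in S$. To force $f$ to also be globally irreducible, I simultaneously constrain the $c_i$ modulo an auxiliary prime $\mathfrak p \notin S$ of sufficiently large norm so that $f \bmod \mathfrak p$ is an irreducible polynomial of degree $l$ over the residue field (such polynomials with prescribed trace and constant term exist in abundance over large finite fields), compatibility being provided by the Chinese remainder theorem. Setting $K = L[X]/(f)$, one checks that $K$ splits $A$ locally at every place: at $v \in S$, $K \otimes_L L_v$ is a field of degree $l$ which splits the division algebra $A_v$ (since its index divides $l$); at $v \notin S$, $A_v$ is already split. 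The Albert--Brauer--Hasse--Noether theorem then implies that $K$ splits $A$, hence $K$ embeds in $A$, providing the desired $x$.

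I expect the most delicate step to be the simultaneous approximation: arranging the local irreducibility data at every $v \in S$ while also forcing global irreducibility. Because the constant term of $f$ is fixed at $\pm 1$, Eisenstein-type tricks are unavailable, and the argument must rely instead on the abundance of irreducible polynomials with prescribed trace and constant term over sufficiently large finite fields, in conjunction with the Chinese remainder theorem. The remaining ingredients -- Krasner's lemma, weak approximation, and the Hasse principle for central simple algebras -- are classical.
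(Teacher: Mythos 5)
Your argument follows the same route as the paper's: dispose of the central-witness case ($\alpha \in L$ with $\alpha^l = 1$, $l\alpha = t$), pick non-central local witnesses at the ramified places and extract their degree-$l$ minimal polynomials, glue these into a global monic degree-$l$ polynomial via Krasner's lemma together with weak approximation, and conclude by the Hasse--Brauer--Noether theorem plus the embedding criterion for splitting fields. The only deviation is the auxiliary-prime step you use to force global irreducibility, and it is superfluous: in the nontrivial case $A$ is a division algebra, so the set of ramified places is nonempty, and once $f$ is irreducible over $L_{v_1}$ for a single such $v_1$ it is already irreducible over $L \subset L_{v_1}$ -- which is exactly how the paper's Lemma~\ref{lemmaGlobalPolynomialGivenLocals} is phrased.
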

This proposition is well-known for $\kar L \neq 2$ and $l = 2$ (the proof in this case can rely on the Hasse principle for quadratic forms), and this case has been exploited for first-order definitions of $\Z$ in $\Q$ (and more generally for rings of integers in number fields, see \cite{parkDefiningIntegers}, since adapted to global fields of odd characteristic, see \cite{eisentraegerMorrison}). 
For the proof in the general case we quote two lemmas from the theory of central simple algebras:
\begin{lemma}[{\cite[Theorem 4.12]{jacobsonBasicAlgebraII}}]\label{lemmaSplittingFieldIffSubfield}
	Let $D$ be a central division algebra of degree $n$ over a field $F$, and let $F'$ be a
field of degree $n$ over $F$. Then $F'$ splits $D$ if and only if $F'$ can be embedded into $D$ over $F$ (i.e.\ there is a subalgebra of $D$ isomorphic to $F'$ over $F$).
\end{lemma}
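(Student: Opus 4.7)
The plan is to prove both directions by classical central simple algebra techniques, exploiting the fact that $D$ is the unique simple left $D$-module together with the $F'$-algebra structure of $D \otimes_F F'$.

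For the direction ``$F' \hookrightarrow D$ implies $F'$ splits $D$'', I would view $D$ as a right $F'$-vector space via the embedding; this is $n$-dimensional over $F'$, since $\dim_F D = n^2$ and $[F':F] = n$. The natural map
\[ \Phi \colon D \otimes_F F' \to \operatorname{End}_{F'}(D), \quad \Phi(d \otimes \lambda)(x) = dx\lambda \]
is a well-defined $F'$-algebra homomorphism (right $F'$-linearity uses the commutativity of $F'$). Both source and target are simple $F'$-algebras of the same $F'$-dimension $n^2$ ($D \otimes_F F'$ is central simple over $F'$, and $\operatorname{End}_{F'}(D) \cong M_n(F')$), so $\Phi$ is injective hence an isomorphism, yielding $D \otimes_F F' \cong M_n(F')$.

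For the reverse direction, suppose $D \otimes_F F' \cong M_n(F') \cong \operatorname{End}_{F'}(V)$ for some $n$-dimensional $F'$-vector space $V$. Then $V$, viewed as a left $D \otimes_F F'$-module, has $F$-dimension $n^2$. Restricting along $D \hookrightarrow D \otimes_F F'$ makes $V$ into a left $D$-module of $F$-dimension $n^2 = \dim_F D$, and since the only simple left $D$-module is $D$ itself, $V \cong D$ as left $D$-modules. Fix such an identification. The action of $1 \otimes F' \subseteq D \otimes_F F'$ on $V = D$ consists of left-$D$-linear endomorphisms; but $\operatorname{End}_D(D) \cong D^{\mathrm{op}}$ via $\phi \mapsto \phi(1)$, with every endomorphism being right multiplication by its value at $1$. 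Hence the $F'$-action is given by right multiplication by elements of a subring of $D$, yielding a ring antihomomorphism $F' \to D$. Commutativity of $F'$ turns this into an $F$-algebra embedding $F' \hookrightarrow D$, injectivity following from $F'$ being a field.

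The main obstacle is a small bookkeeping issue: in $D \otimes_F F'$ the base field $F$ appears twice, and one must check that the two resulting $F$-actions on $V$ -- via $F \subseteq D$ and via $F \subseteq F'$ -- coincide, so that the final map $F' \to D$ is actually $F$-linear. This is automatic because $F$ lies in the centre of $D \otimes_F F'$ and the two embeddings of $F$ are identified in the tensor product. The supporting dimension counts ($\dim_F V = n^2$ and $\dim_{F'} V = n$) and the fact that $D$ is simple as a left $D$-module over itself are standard.
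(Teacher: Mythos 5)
The paper does not prove this lemma at all: it is quoted verbatim as \cite[Theorem 4.12]{jacobsonBasicAlgebraII} and used as a black box. So there is no in-paper proof to compare against; your proposal fills in a proof that the paper delegates to a reference. Given that, the relevant question is simply whether your argument is correct, and it is. The forward direction is the standard computation: viewing $D$ as a right $F'$-space of dimension $n$ and checking that $d \otimes \lambda \mapsto (x \mapsto dx\lambda)$ is an $F'$-algebra map $D \otimes_F F' \to \operatorname{End}_{F'}(D)$ (right $F'$-linearity and multiplicativity both use commutativity of $F'$, as you note), then concluding by simplicity and equality of dimensions. The reverse direction correctly uses that $V$ is a $D$-module of $F$-dimension $n^2$, hence isomorphic to $D$ as the unique simple $D$-module, and that the commuting action of $1 \otimes F'$ therefore lands in $\operatorname{End}_D(D) \cong D^{\mathrm{op}}$, giving an $F$-embedding $F' \hookrightarrow D$ after using commutativity again. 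You also correctly flagged and resolved the only genuinely delicate point, namely that the two copies of $F$ inside $D \otimes_F F'$ agree so the resulting map is $F$-linear. This is essentially the textbook double-centralizer/splitting-field argument and is a sound replacement for the citation.
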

\begin{lemma}[{\cite[Proposition 2.6.3]{centralSimpleAlgebrasAndGalCohom}}]\label{lemmaComputingNrdAndTrd}
	Let $A/F$ be a central simple algebra of degree $n$ and $x \in A$. If $F' \subseteq A$ is commutative subalgebra which contains $x$ and is a degree $n$ field extension of $F$, then $\Nrd(x) = \norm_{F'/F}(x)$ and $\Trd(x) = \trace_{F'/F}(x)$. 
	In particular, if $n$ is prime and $A$ is a division algebra, then $\Nrd(x) = x^n$ and $\Trd(x) = nx$ if $x \in F$ and otherwise $\Nrd(x) = \norm_{F(x)/F}(x)$ and $\Trd(x) = \trace_{F(x)/F}(x)$.
\end{lemma}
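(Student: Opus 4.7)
The plan is to reduce the claim to an equality of two monic polynomials of degree $n$ in $F[T]$: the reduced characteristic polynomial $P_x^{\mathrm{red}}(T)$ of $x$ (whose constant term is $(-1)^n \Nrd(x)$ and whose coefficient of $T^{n-1}$ is $-\Trd(x)$) and the characteristic polynomial $P_x^{F'}(T)$ of the $F$-linear endomorphism ``multiplication by $x$'' on $F'$. Once $P_x^{\mathrm{red}} = P_x^{F'}$ is established, the identities $\Nrd(x) = \norm_{F'/F}(x)$ and $\Trd(x) = \trace_{F'/F}(x)$ fall out of coefficient comparison.

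Both polynomials will be identified as $n$-th roots of the characteristic polynomial $\chi_{L_x}(T) \in F[T]$ of left multiplication $L_x \colon A \to A$, viewed as an $F$-linear endomorphism of the $n^2$-dimensional $F$-space $A$. The identity $\chi_{L_x} = (P_x^{\mathrm{red}})^n$ is the standard description of the reduced characteristic polynomial: after extending scalars to a splitting field $E$ of $A$, the algebra $A \otimes_F E \cong M_n(E)$ decomposes as a left module over itself into $n$ copies of the standard $n$-dimensional representation. The other factorisation is the heart of the argument. The inclusion $F' \subseteq A$ makes $A$ into a left $F'$-module, which is free because $F'$ is a field, and of rank $n$ by dimension count. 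For $x \in F'$ the operator $L_x$ is simply scalar multiplication in this $F'$-structure. Combining an $F'$-basis $e_1, \dotsc, e_n$ of $A$ with an $F$-basis $\omega_1, \dotsc, \omega_n$ of $F'$ yields an $F$-basis $(\omega_j e_i)$ of $A$ in which $L_x$ takes block-diagonal form with $n$ identical blocks, each equal to the matrix of multiplication by $x$ on $F'$. Hence $\chi_{L_x} = (P_x^{F'})^n$. Cancelling $n$-th powers in the UFD $F[T]$ between monic polynomials of degree $n$ produces the desired equality.

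The ``in particular'' statement follows by specialisation. If $A$ is a central division algebra of prime degree $n$ and $x \in F$, then $L_x$ is scalar multiplication on $A$, so $\chi_{L_x}(T) = (T-x)^{n^2}$, forcing $P_x^{\mathrm{red}}(T) = (T-x)^n$ and hence $\Nrd(x) = x^n$, $\Trd(x) = nx$. If $x \notin F$, then $F(x) \supsetneq F$ is a commutative subfield of $A$; applying the double centraliser theorem to $F(x) \subseteq A$ forces $[F(x):F]$ to divide $n$, so by primality $[F(x):F] = n$, and the main case with $F' = F(x)$ finishes the argument. The single substantive ingredient invoked from outside is the identification $\chi_{L_x} = (P_x^{\mathrm{red}})^n$; that is where the proof leans most heavily on the theory of central simple algebras, and I regard it as the principal obstacle to a fully self-contained presentation.
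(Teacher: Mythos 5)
Your proof is correct. Note, though, that the paper does not actually prove this lemma: it quotes it verbatim from \cite[Proposition 2.6.3]{centralSimpleAlgebrasAndGalCohom}, so there is no internal argument to compare against. What you have written is the standard textbook derivation: both the reduced characteristic polynomial of $x$ and the characteristic polynomial of multiplication-by-$x$ on $F'$ are monic of degree $n$, and both have $n$-th power equal to $\chi_{L_x}$ --- the first after extension of scalars to a splitting field, the second from the block-diagonalisation of $L_x$ relative to the rank-$n$ free $F'$-module structure on $A$, which you verify correctly (the key point being that $L_x$ is $F'$-linear precisely because $F'$ is commutative and contains $x$). Unique factorisation in $F[T]$ then forces the two degree-$n$ monic polynomials to coincide, and the identities for $\Nrd$ and $\Trd$ drop out by comparing coefficients.

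One cosmetic remark on the ``in particular'' clause: you invoke the double centraliser theorem to conclude that $[F(x):F]$ divides $n$ when $x \notin F$, whereas the paper's own remark immediately following the lemma obtains this more cheaply from the tower law $n^2 = \dim_{F(x)} A \cdot [F(x):F]$, which forces $[F(x):F] \in \{1, n, n^2\}$; the case $n^2$ is excluded because a division algebra of prime degree $n > 1$ is noncommutative, so $F(x) \neq A$. Both routes are valid; the paper's is just more elementary. Finally, the factorisation $\chi_{L_x} = (P_x^{\mathrm{red}})^n$ that you flag as the principal external input is indeed standard and comes essentially for free from the definition of the reduced characteristic polynomial via splitting fields, so I would not regard it as a real obstacle to self-containment.
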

In this lemma, as in the sequel, we write $F(x)$ for the smallest subalgebra of $A$ containing $F$ and $x$; under the assumption that $A$ is a division algebra of prime degree $n$, $F(x)$ is necessarily a commutative division algebra, i.e.\ a field. 
Then the degree $[F(x) \colon F]$ is either $1$ or $n$, since $A$ is a (left) $F(x)$-vector space and $n^2 = \dim_F A = \dim_{F(x)} A \cdot [F(x) \colon F]$ (cf.\ the usual tower law for field extensions).

Lastly, we need the following easy consequence of Krasner's Lemma:
\begin{lemma}\label{lemmaGlobalPolynomialGivenLocals}
  Let $l$ be a prime number, $L$ a global field, $a \in L$, and $v_1, \dotsc, v_r$ places of $L$. For each $i$, let $f_i \in L_{v_i}[X]$ be a monic irreducible polynomial of degree $l$ with constant coefficient $(-1)^l$ and $X^{l-1}$-coefficient $-a$. Then there exists a monic irreducible polynomial $f \in L[X]$ of degree $l$ with constant coefficient $(-1)^l$ and $X^{l-1}$-coefficient $-a$ such that $L_{v_i}[X]/(f) \cong L_{v_i}[X]/(f_i)$, i.e.\ the completion above $v_i$ of the global field $L[X]/(f)$ is unique and given by $L_{v_i}[X]/(f_i)$.
\end{lemma}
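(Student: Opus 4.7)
The plan is to invoke weak approximation to construct a polynomial $f \in L[X]$ whose coefficients are $v_i$-adically close to those of each $f_i$ (while fixing the two prescribed coefficients), and then use Krasner's Lemma to conclude that this closeness forces $L_{v_i}[X]/(f) \cong L_{v_i}[X]/(f_i)$ and in particular the irreducibility of $f$ over each $L_{v_i}$.

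More concretely, write $f_i = X^l - aX^{l-1} + b_{i,l-2}X^{l-2} + \dotsb + b_{i,1}X + (-1)^l$ for each $i$; note that the prescribed coefficients are the same on both ends, so only the middle coefficients $b_{i,1}, \dotsc, b_{i,l-2}$ vary with $i$. (If $l = 2$ there are no middle coefficients at all, so the $f_i$ all coincide and we may simply take $f = X^2 - aX + 1$; its irreducibility over some $L_{v_i}$ already implies irreducibility over $L$.) For $l \geq 3$, weak approximation for global fields provides, for any prescribed $\varepsilon > 0$ and each $1 \leq j \leq l-2$, an element $c_j \in L$ with $|c_j - b_{i,j}|_{v_i} < \varepsilon$ for all $i$. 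Let $f = X^l - aX^{l-1} + c_{l-2}X^{l-2} + \dotsb + c_1 X + (-1)^l \in L[X]$.

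Now fix a place $v_i$ and an algebraic closure of $L_{v_i}$. The polynomial $f_i$, being irreducible of degree $l$, has $l$ distinct roots $\alpha_1, \dotsc, \alpha_l$ with $L_{v_i}(\alpha_1) = \dotsb = L_{v_i}(\alpha_l)$ a degree-$l$ extension of $L_{v_i}$. By continuity of roots, if $\varepsilon$ was chosen sufficiently small (depending on the $f_i$), then $f$ has roots $\beta_1, \dotsc, \beta_l$ with $|\beta_j - \alpha_j|_{v_i}$ less than the minimum of $|\alpha_j - \alpha_k|_{v_i}$ over $k \neq j$. Krasner's Lemma then gives $L_{v_i}(\alpha_j) \subseteq L_{v_i}(\beta_j)$; since $\deg f = l$, this forces equality and shows that $f$ is irreducible over $L_{v_i}$ with $L_{v_i}[X]/(f) \cong L_{v_i}[X]/(f_i)$. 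Irreducibility of $f$ over $L$ follows from its irreducibility over any single $L_{v_i}$.

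The only point requiring care is that the $\varepsilon$ from Krasner's Lemma depends on $v_i$ and $f_i$, but since there are only finitely many places $v_1, \dotsc, v_r$ involved, we may take the minimum of the resulting bounds and invoke weak approximation once. No step presents a genuine obstacle; the argument is the standard deformation of local polynomials to global ones via Krasner and weak approximation, with the mild twist that two coefficients must be preserved exactly, which is harmless because weak approximation is applied only to the remaining $l-2$ coefficients.
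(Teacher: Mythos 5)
Your argument follows the same route as the paper: approximate the middle coefficients by weak approximation, then invoke Krasner's Lemma to transfer irreducibility and the completion isomorphism. Your separate treatment of $l=2$ is a nice touch that incidentally sidesteps the archimedean case (for odd $l$ there are no irreducible degree-$l$ polynomials over $\mathbb{R}$ or $\mathbb{C}$, so all $v_i$ are automatically non-archimedean), which the paper instead addresses with a brief remark.

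However, there is one step you assert without justification that genuinely needs it: you write that $f_i$, ``being irreducible of degree $l$, has $l$ distinct roots.'' This is false in general for irreducible polynomials over local fields of positive characteristic (e.g.\ $X^p - t$ over $\F_p((t))$), and separability is exactly what your continuity-of-roots/Krasner argument relies on. The paper opens its proof by closing this gap: if $f_i$ were inseparable, then since $\deg f_i = l$ is prime we would need $\kar L = l$ and $f_i$ to be a polynomial in $X^l$; together with the prescribed coefficients this forces $f_i = X^l + (-1)^l$, which equals $(X + (-1))^l$ in characteristic $l$ and is therefore reducible — a contradiction. You should include this observation (or some equivalent argument) before appealing to distinctness of the roots.
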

\begin{proof}
  Each $f_i$ must automatically be separable -- otherwise we would have to have $l = \kar L$, $f_i = X^l + (-1)^l$, but then $f_i$ would be reducible.

  If the coefficients of $f$ are $v_1$-adically sufficiently close to those of $f_1$, then $f$ is irreducible in $L_{v_1}[X]$ and therefore in $L[X]$, and additionally $L_{v_1}[X]/(f) \cong L_{v_1}[X]/(f_1)$: This is a standard consequence of Krasner's Lemma, see e.g.\ \cite[Exercise 9.8.7]{jacobsonBasicAlgebraII}; see also \cite[Lemma 8.1.6 and proof of Proposition 8.1.5]{NeukirchSchmidtWingberg}. (For archimedean places $v_i$, the statement is easily checked separately.)
 
By weak approximation, we can choose the coefficients of $f$ to $v_i$-adically approximate the coefficients of $f_i$ arbitrarily well simultaneously for all $i$.
\end{proof}

\begin{proof}[Proof of Proposition \ref{propositionLocalGlobalPrinciple}]
	This is a local-global principle. Note that there is nothing to show if $A$ is isomorphic to the algebra of $l \times l$-matrices over $L$ since the set on both sides is just $L$ in this case. So let us assume that $A$ is non-split over $L$; hence, since $A$ has prime degree over $L$, $A$ is a division algebra by Wedderburn's Theorem (see e.g.\ \cite[Theorem 2.1.3]{centralSimpleAlgebrasAndGalCohom}).
        The inclusion $\subseteq$ is clear. For the other inclusion, consider an element $a \in L$ of the right-hand side. We want to show that $a \in S(A/L)$, i.e.\ that there exists $x \in A$ with $\operatorname{Nrd}(x) = 1$ and $\operatorname{Trd}(x) = a$. Let $v_1, \dotsc, v_r$ be the ramified places of $A$. (The local condition is trivial at all other places.) For each $v_i$ there exists an element $x_i \in A \otimes L_{v_i}$ of reduced norm $1$ and reduced trace $a$. 
We can disregard the case where $x_i$ is in the centre $L_{v_i}$ -- the norm condition then forces $x_i^l = 1$, and if $l \neq \kar L$ then the trace condition forces $x_i = \frac a l \in L$, or if $l = \kar L$ then $x_i = 1 \in L$; in either case we are done globally.
        
Assume therefore that $x_i \not\in L_{v_i}$, so $L_{v_i}(x_i) / L_{v_i}$ is a field extension of degree $l$ which splits $A \otimes L_{v_i}$ (by Lemma \ref{lemmaSplittingFieldIffSubfield}), with $\norm_{L_{v_i}(x)/L_{v_i}}(x) = 1$ and $\trace_{L_{v_i}(x)/L_{v_i}}(x) = a$ (by Lemma \ref{lemmaComputingNrdAndTrd}). Write $f_i \in L_{v_i}[X]$ for the minimal polynomial of $x_i$; it is a monic irreducible polynomial of degree $l$ with constant coefficient $(-1)^l$ and $X^{l-1}$-coefficient $-a$ for reasons of norm and trace.

Let $f \in L[X]$ be a polynomial as in Lemma \ref{lemmaGlobalPolynomialGivenLocals}, so $L' = L[X]/(f)$ is a degree $l$ field extension of $L$ with completions $L_{v_i}[X]/(f_i) \cong L_{v_i}(x_i)$ above each $v_i$. The element $\overline{X} \in L'$ has minimal polynomial $f$, hence $\norm_{L'/L}(\overline{X}) = 1$ and $\trace_{L'/L}(\overline{X}) = a$. The field $L'$ splits $A$ by the Hasse-Brauer-Noether Theorem since it splits $A$ everywhere locally, so $L'$ embeds into $A$ by Lemma \ref{lemmaSplittingFieldIffSubfield}, and the image of $\overline{X}$ under this embedding has reduced norm $1$ and reduced trace $a$ by Lemma \ref{lemmaComputingNrdAndTrd}.
\end{proof}
This proof idea is already present in \cite[Theorem 3.1]{eisentraegerIntegralityAtAPrime}.

Next we investigate the local conditions $S(A/L_v)$. 
For a finite field $\F$, define \[ U_l(\F) = \{ \trace_{\F^{(l)}/ \F}(x) \colon x \in \F^{(l)} \setminus \F, \norm_{\F^{(l)}/ \F}(x) = 1 \} \subseteq \F,\] where we write $\F^{(l)}$ for the extension field of $\F$ of degree $l$ (unique up to isomorphism).

The following is essentially \cite[Lemma 2.1]{poonenUniversalExistential}.
\begin{lemma}\label{lemmaFirstLocalObservations}
  Let $A/F$ be a central simple algebra of prime degree $l$ over a local field.
  \begin{enumerate}
    \item If $A$ is split, then $S(A/F) = F$.
    \item If $A$ is a division algebra, then for all irreducible monic polynomials $X^l + a_{l-1}X^{l-1} + \dotsb + a_0$ with constant coefficient $a_0 = (-1)^l$ we have $-a_{l-1} \in S(A/F)$.
    \item If $A$ is a division algebra and $F$ is non-archimedean, then $\mathcal{O} \supseteq S(A/F) \supseteq \operatorname{res}^{-1}(U_l(\F))$, where $\mathcal{O}$ is the valuation ring, $\F$ is the residue field, and $\operatorname{res} \colon \mathcal{O} \to \F$ is the residue map.
  \end{enumerate}
\end{lemma}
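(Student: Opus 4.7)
For part (1), since $A \cong M_l(F)$ the reduced trace and norm correspond to the matrix trace and determinant. Given any $a \in F$, the companion matrix of $X^l - aX^{l-1} + (-1)^l$ has trace $a$ and determinant $1$, which shows $F \subseteq S(A/F)$; the reverse containment is automatic because reduced traces always lie in the centre $F$.

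For part (2), the plan is to set $F' = F[X]/(f)$, a degree $l$ field extension of $F$ (by irreducibility of $f$), and to verify that $F'$ splits $A$. In the non-archimedean case this uses that the local invariant of a central division algebra of degree $l$ has order exactly $l$ in $\Q/\Z$, while base-change to a degree-$l$ extension multiplies invariants by $l$, hence kills it. In the remaining case $F = \mathbb{R}$, the only central division algebra is $\mathbb{H}$ with $l=2$, and $F' \cong \mathbb{C}$ splits $\mathbb{H}$. With splitting established, Lemma \ref{lemmaSplittingFieldIffSubfield} produces an embedding $F' \hookrightarrow A$; letting $x$ denote the image of $\overline{X}$, Lemma \ref{lemmaComputingNrdAndTrd} together with the standard expression of $N_{F'/F}(\overline{X})$ and $\trace_{F'/F}(\overline{X})$ as signed elementary symmetric functions in the coefficients of $f$ gives $\Nrd(x) = (-1)^l \cdot (-1)^l = 1$ and $\Trd(x) = -a_{l-1}$.

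For part (3), to establish $S(A/F) \subseteq \mathcal{O}$, take $x \in A$ with $\Nrd(x) = 1$. If $x \in F$ then $x^l = 1$ forces $x \in \mathcal{O}^\times$ and $\Trd(x) = lx \in \mathcal{O}$; otherwise $F(x)/F$ is a degree $l$ field extension, the unique extension of the absolute value satisfies $|x|^l = |N_{F(x)/F}(x)| = 1$, so $x$ and all its Galois conjugates are integral, giving $\Trd(x) \in \mathcal{O}$. For the reverse containment, given $a \in \mathcal{O}$ whose residue equals $\trace_{\F^{(l)}/\F}(\bar y)$ for some norm-one $\bar y \in \F^{(l)} \setminus \F$, the minimal polynomial $\bar f$ of $\bar y$ is monic irreducible of degree $l$ over $\F$ with constant term $(-1)^l$ and $X^{l-1}$-coefficient $-\operatorname{res}(a)$. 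Lift $\bar f$ to a monic $f \in \mathcal{O}[X]$ of the same degree with constant term $(-1)^l$ and $X^{l-1}$-coefficient $-a$ (other coefficients lifted arbitrarily); any factorisation of $f$ over $F$ would, by Gauss's lemma, descend to a factorisation of $\bar f$ over $\F$, so $f$ is irreducible over $F$, and part (2) then delivers $a \in S(A/F)$.

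The main obstacle is the splitting claim underlying part (2), which relies on the structure of the local Brauer group provided by local class field theory; the remaining steps are routine applications of the preceding lemmas together with standard valuation-theoretic tools (the uniqueness of the extension of the absolute value on a local field, and Gauss's lemma for monic polynomials over $\mathcal{O}$).
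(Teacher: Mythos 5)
Your proof is correct and follows essentially the same route as the paper: split case via characteristic polynomials, division case via degree-$l$ splitting plus Lemmas~\ref{lemmaSplittingFieldIffSubfield} and~\ref{lemmaComputingNrdAndTrd}, and integrality plus lifting of irreducible residue polynomials for part (3). The minor deviations --- verifying the degree-$l$ splitting fact directly via the invariant map rather than citing the reference, and appealing to Gauss's lemma instead of noting that a monic lift generates an unramified extension --- are just expansions of the same standard facts the paper invokes.
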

\begin{proof}
  In the split case reduced norm and trace coincide with the usual matrix determinant and trace, and all monic polynomials of degree $l$ do occur as characteristic polynomials.

For the second point, every monic irreducible polynomial $f$ generates a field extension $F[x]/(f)$, and every such field extension splits $A$ by the theory of central simple algebras over local fields (see e.g.\ \cite[Corollary 7.1.4]{NeukirchSchmidtWingberg}). Hence, by Lemma \ref{lemmaSplittingFieldIffSubfield}, $F[x]/(f)$ embeds into $A$, and then $-a_{l-1} \in S(A/F)$ by Lemma \ref{lemmaComputingNrdAndTrd}, where $a_{l-1}$ is the coefficient of $X^{l-1}$ in $f$.

  For the case of a division algebra over a non-archimedean local field, let $x \in A$ with $\Nrd(x) = 1$. Then by Lemma \ref{lemmaComputingNrdAndTrd} $\norm_{F(x)/F}(x)^l = 1$, so $x$ is contained in the valuation ring of the local field $F(x)$, therefore integral over $\mathcal{O}$ and hence has integral trace. This proves $S(A/F) \subseteq \mathcal{O}$. 
For the other inclusion, if $\overline{a} \in U_l(\F)$ then there exists a monic irreducible polynomial $\overline{f} = X^n + \overline{a_{l-1}}X^{l-1} + \dotsb + \overline{a_0} \in \F[X]$ with $\overline{a_{l-1}} = - \overline{a}$ and $\overline{a_0} = (-1)^l$. Any lift of $\overline f$ to $F[X]$ is irreducible over $F$ (because it generates an unramified extension of degree $l$), hence any lift $a \in \mathcal{O}$ of $\overline{a}$ is in $S(A/F)$ by the second point.
\end{proof}

We can now give a satisfactory statement on $S(A/F)$ in the non-split local case.
\begin{proposition}\label{propositionLocalSComputation}
  Let $A/F$ be a central division algebra of prime degree $l$ over a non-archimedean local field.
  \begin{enumerate}
    \item If $l > 2$, then $S(A/F)$ is equal to the valuation ring $\mathcal{O}$ of $F$.
    \item If $l = 2$, write $V(A/F)$ for the topological interior of $S(A/F)$. We then have $V(A/F) - V(A/F) = \mathcal{O}$, where $V(A/F) - V(A/F)$ is the set of differences of two elements of $V(A/F)$.
  \end{enumerate}
\end{proposition}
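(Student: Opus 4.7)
The plan is to split by $l$, using Lemma~\ref{lemmaFirstLocalObservations}(3) for the upper bounds $S(A/F) \subseteq \mathcal{O}$ in~(1) and $V(A/F) - V(A/F) \subseteq \mathcal{O}$ in~(2); the work lies in the lower bounds.

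For part~(1) with $l \geq 3$: I would prove the stronger statement $U_l(\F) = \F$ for every finite field $\F$; combined with Lemma~\ref{lemmaFirstLocalObservations}(3) this gives $\operatorname{res}^{-1}(\F) = \mathcal{O} \subseteq S(A/F)$. Unpacking the definition, $U_l(\F) = \F$ means that for every $\bar t \in \F_q$ there exists a monic irreducible polynomial of degree $l$ over $\F_q$ with $X^{l-1}$-coefficient $-\bar t$ and constant coefficient $(-1)^l$. There are $q^{l-2}$ candidate polynomials once these two coefficients are fixed, and since $l$ is prime, a reducible candidate must admit an irreducible factor of some degree $d$ with $1 \leq d \leq \lfloor l/2 \rfloor$. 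Counting the candidates divisible by a fixed degree-$d$ irreducible (a system of $d$ linear conditions on the $l-2$ free coefficients, so at most $q^{l-2-d}$ candidates), then summing over the $\frac{1}{d}\sum_{e \mid d} \mu(e) q^{d/e}$ choices of such factors and over $d$, yields a total strictly smaller than $q^{l-2}$ for every $l \geq 3$ and every $q$, leaving at least one irreducible candidate.

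For part~(2) with $l = 2$: define $W = \{ t \in F : X^2 - tX + 1 \text{ is irreducible in } F[X] \}$. By Lemma~\ref{lemmaFirstLocalObservations}(2), $W \subseteq S(A/F)$. I claim $W$ is open in $F$: in residue characteristic $\neq 2$ this is because $W = \{t \in F : t^2 - 4 \in F^{\ast} \setminus F^{\ast 2}\}$ and $F^{\ast 2}$ is an open (hence closed) subgroup of $F^{\ast}$; in residue characteristic $2$ one instead writes $W = \{t \in F^{\ast} : 1/t^2 \notin \wp(F)\}$, using that the image of the Artin-Schreier map $\wp \colon F \to F$, $x \mapsto x^2 - x$, is a closed subgroup of $F$ (being an open subgroup, since it contains the maximal ideal by Hensel's Lemma). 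Therefore $W \subseteq V(A/F)$, and it remains to prove $\mathcal{O} \subseteq W - W$. Given $c \in \mathcal{O}$, I would produce $a \in F$ with $a, a-c \in W$ by a case analysis on the residue $\bar c \in \F$. Lifts of elements of $U_2(\F)$ always land in $W$, but $U_2(\F)$ is often too small for its difference set to cover $\F$ (for instance $U_2(\F_3) = \{0\}$); the additional tool is that lifts $t$ with $\bar t = \pm 2$ and $v(t \mp 2)$ odd also lie in $W$, since then the discriminant $t^2 - 4 = (t-2)(t+2)$ has odd valuation and is therefore a non-square. These two pools of $W$-elements together meet every residue class of $\F$, permitting a direct matching argument to realise any $c \in \mathcal{O}$ as a difference.

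The main obstacle is the residue characteristic $2$ case of part~(2), where the square/non-square dichotomy must be replaced by Artin-Schreier theory; both the openness of $W$ and the residue-class-covering argument then need to be redone within that framework, and the fact that $F/\wp(F)$ can be larger than $\F_2$ means one must be a bit more careful when identifying which lifts of a given residue actually land in $W$.
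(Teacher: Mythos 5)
Your part~(1) argument has a genuine gap: the naive count does not yield a total strictly below $q^{l-2}$ once $l \geq 5$. Taking $l=5$, the number of candidates having an irreducible factor of degree $d \in \{1,2\}$ is bounded by your method only by $(q-1)\,q^{2} + \tfrac{q^{2}-q}{2}\,q \approx \tfrac{3}{2}q^{3}$, which exceeds the total $q^{3}$ for all $q \geq 3$. More generally your bound is roughly $H_{\lfloor l/2 \rfloor}\,q^{l-2}$ with $H_k$ the $k$-th harmonic number, and $H_k > 1$ already for $k \geq 2$. This is exactly why the paper uses the counting argument only for $l=3$ (where $\lfloor l/2\rfloor = 1$, the count per linear factor is a single polynomial, and $q-1 < q$), and for $l \geq 5$ instead invokes the explicit distribution theorem of Moreno--Moreno (Theorem~\ref{thmExistenceOfIrreduciblePolynomials}, with a finite hand-check for $l=5$, $q \leq 9$). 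The statement $U_l(\F) = \F$ for $l \geq 5$ really does need Weil-type input, not elementary counting; claiming the crude sum is strictly smaller than $q^{l-2}$ ``for every $l \geq 3$ and every $q$'' is false.

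For part~(2) your strategy is the same in spirit as the paper's, but two details need care. First, your case split for the openness of $W$ should be governed by $\kar F$, not residue characteristic: over $\Q_2$ the discriminant criterion $t^2-4 \notin F^{\times 2}$ still applies and $F^{\times 2}$ is still open; the Artin--Schreier description of $W$ is correct only when $\kar F = 2$. Second, and more seriously, your device for producing extra points of $W$ in the residue classes $\pm\bar 2$ (take $v(t\mp 2)$ odd so that $t^2-4 = (t-2)(t+2)$ has odd valuation) silently assumes $v(t-2) \neq v(t+2)$, which fails in residue characteristic $2$ since there $\bar 2 = -\bar 2$ and typically $v(t-2)=v(t+2)$, making $v(t^2-4)$ even. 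The paper sidesteps this entirely by establishing $2 + \pi + \pi^2\mathcal{O} \subseteq S(A/F)$ via Eisenstein applied to $f(X+1) = X^2 - (a-2)X + (2-a)$, an argument that is uniform in all characteristics. You do flag residue characteristic~$2$ as an unresolved obstacle, but as stated the proof is incomplete precisely there. Lastly, the residue-class covering step you wave at is itself not free: the paper needs Poonen's lemma that $U_2(\F)-U_2(\F)=\F$ when $\lvert\F\rvert > 11$, together with an exhaustive verification that $(U_2(\F)\cup\{\bar 2,-\bar 2\})-U_2(\F)=\F$ for the small residue fields, so some finite case-checking is unavoidable.
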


For the proof of the first point, it suffices to prove the following lemma:
\begin{lemma}\label{lemmaUComputation}
  For $l > 2$ and an arbitrary finite field $\F$, we have $U_l(\F) = \F$.
\end{lemma}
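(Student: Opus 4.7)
The plan is to reformulate in terms of polynomial counts and then carry out a double-counting argument. Writing $q = |\F|$, the equality $U_l(\F) = \F$ is equivalent to the statement that for every $a \in \F$ the number $N_a$ of monic irreducible $f \in \F[X]$ of degree $l$ with $X^{l-1}$-coefficient $-a$ and constant term $(-1)^l$ is strictly positive: such an $f$ has $l$ distinct roots in $\F^{(l)} \setminus \F$, each of trace $a$ and norm $1$, and conversely any element of $\F^{(l)} \setminus \F$ with these properties has minimal polynomial of this form.

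I would count pairs $(f, \beta)$, where $f$ is of our form and $\beta \in \overline{\F}$ is a distinct root of $f$, stratifying by $d = [\F(\beta):\F]$. For $\beta \neq 0$ of degree $d \leq l-2$, the equation $f(\beta) = 0$ is a single $\F(\beta)$-linear constraint on the $l-2$ free coefficients of $f$ whose left hand side $(c_1, \ldots, c_{l-2}) \mapsto \sum c_i \beta^i$ is surjective onto $\F(\beta)$ (as $\beta, \beta^2, \ldots, \beta^{l-2}$ span $\F(\beta)$ for $\beta \neq 0$ and $d \leq l-2$), contributing $q^{l-d-2}$ polynomials per $\beta$; summing via the Möbius identity $dI_d = \sum_{e|d} \mu(e) q^{d/e}$ (where $I_d$ is the number of monic irreducibles of degree $d$) yields the total for each $d$. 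For $d = l-1$, $f$ must factor as $\min(\beta)(X-\gamma)$ with $\gamma \in \F$, which imposes the relation $(b_{l-2}+a)b_0 = (-1)^{l+1}$ on the coefficients of $\min(\beta)$; the stratum contributes $(l-1) J$, where $J$ counts irreducible monic polynomials of degree $l-1$ satisfying this. For $d = l$, the contribution is exactly $l N_a$.

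For $l = 3$ the identity collapses cleanly to $3 N_a = 2q + 1 - 2J - D$, where $D \geq 0$ is the inseparability defect measuring the gap between $\sum_f \deg f$ and $\sum_f$ (number of distinct roots); the trivial bound $J \leq q - 1$ (the constraint determines $b_0 = (-1)^{l+1}/(b_1+a)$, with $b_1 \neq -a$) together with the observation $2J + D \leq 2q - 2$ (inseparable $f$'s arise precisely from the quadratics satisfying the constraint that are not irreducible, namely those of the form $(X-\alpha)^2$) yields $N_a \geq 1$. The main obstacle in extending this to $l \geq 5$ is obtaining a sharp enough upper bound on $J$: a standard Weil-type count of irreducible polynomials of degree $l-1$ with two prescribed coefficients gives $J \sim q^{l-3}/(l-1)$, and combined with the favourable cancellations $\sum_{d=2}^{l-2} dI_d q^{l-d-2} = (l-3) q^{l-2} - (\text{positive lower-order terms})$, the identity then yields a lower bound on $l N_a$ which is at least $l$ for all $q$ and primes $l > 2$, with the inseparability defect (significant only when $\kar \F = l$, where polynomials like $(X-\alpha)^l$ can appear) handled as a small perturbation.
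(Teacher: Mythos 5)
Your reduction to the statement ``for every $a$ there is a monic irreducible $f$ of degree $l$ with $X^{l-1}$-coefficient $-a$ and constant term $(-1)^l$'' matches the paper. For $l=3$ your double-count is correct (one can check the identity reduces to $N_a = 1 + 2A + B$, where $A$ and $B$ count cubics with three, resp.\ two, distinct roots in $\F$, so $N_a \ge 1$ is automatic), but it is substantially heavier than needed: the paper simply observes that each $c \in \F^\times$ divides exactly one of the $q$ cubics $f_b = X^3 - aX^2 + bX - 1$, so at most $q-1$ of them have a linear factor and hence at least one is irreducible. Your parenthetical justification of $2J + D \le 2q-2$ is also slightly garbled (reducible quadratics contributing to $D$ need not be perfect squares), though the inequality itself holds.

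The real gap is for $l \ge 5$, where what you write is a proof sketch, not a proof. You assert that a ``Weil-type count'' of $J$ together with ``favourable cancellations'' yields a positive lower bound ``for all $q$ and primes $l > 2$,'' but none of this is carried out, and it is precisely the small-$q$ regime where such estimates are delicate. (There also appears to be a slip in the order of magnitude: the constraint $(b_{l-2}+a)b_0 = (-1)^{l+1}$ is one equation on the $l-1$ free coefficients of a monic degree-$(l-1)$ polynomial, so the expected size of $J$ is $\sim q^{l-2}/(l-1)$, not $q^{l-3}/(l-1)$.) The paper sidesteps this by invoking an explicit theorem of Han (on irreducible polynomials with prescribed leading and constant coefficients) that is valid for $n \ge 6$ unconditionally and for $n = 5$ only when $q > 9$, and then checks the finitely many remaining cases $l=5$, $q \le 9$ by hand. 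Your sketch does not acknowledge that these small cases need separate treatment, and as written the claimed uniformity over all $q$ is unsupported.
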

\begin{proof}
  This result is equivalent to showing that for any given $a \in \F$, there exists a monic irreducible polynomial $f \in \F[X]$ of degree $l$ with $X^{l-1}$-coefficient $-a$ and constant coefficient $-1$. 
  Let us write $q$ for the cardinality of $\F$. If $l > 5$, or $l = 5$ and $q > 9$, the result follows from Theorem \ref{thmExistenceOfIrreduciblePolynomials} below. The remaining cases for $l = 5$ we check by hand.

It remains to consider the case $l = 3$. If a polynomial $f_b = X^3 - aX^2 + bX - 1$ is not irreducible, it must be divisible by $X-c$ for some $c \in \F^\times$. However, for each $c$ there exists exactly one $f_b$ divisible by $X-c$. By counting, there exists an $f_b$ which is not divisble by any $X-c$ and hence irreducible.
\end{proof}

\begin{theorem}\label{thmExistenceOfIrreduciblePolynomials}
  Let $\F$ be a finite field of cardinality $q$ and $n > 0$.
  \begin{enumerate}
    \item If $n \geq 5$ and $q > \big(\frac{n+1}{2}\big)^2$, there exists a monic irreducible polynomial of degree $n$ over $\F$ with any given non-zero constant coefficient and given $X^{n-1}$-coefficient.
    \item If $n \geq 6$, the same is true without assumption on $q$.
  \end{enumerate}
\end{theorem}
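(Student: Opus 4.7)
The plan is to use Weil's bound on character sums over $\F^{(n)}$ to show that the count of monic irreducible polynomials of degree $n$ over $\F = \F_q$ with $X^{n-1}$-coefficient $-t$ and nonzero constant coefficient $(-1)^n N$ is positive under the stated hypotheses.

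First I would translate the problem into a counting problem in $\F^{(n)}$. Every monic irreducible polynomial of degree $n$ over $\F$ is the minimal polynomial of some $\alpha \in \F^{(n)}$ with $\F(\alpha) = \F^{(n)}$, and each such polynomial corresponds to a Frobenius orbit of exactly $n$ elements $\alpha$. The trace and constant coefficient conditions translate to $\trace_{\F^{(n)}/\F}(\alpha) = t$ and $\norm_{\F^{(n)}/\F}(\alpha) = N$. Hence it suffices to count $\alpha \in (\F^{(n)})^\times$ satisfying these conditions, subtract those lying in a proper subfield, and show the result exceeds $0$.

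Next I would estimate the total count $N_n(q;t,N)$ via orthogonality of characters of $\F$:
\[ N_n(q; t, N) = \frac{1}{q(q-1)} \sum_{\psi, \chi} \overline{\psi(t)}\, \overline{\chi(N)} \sum_{\alpha \in (\F^{(n)})^\times} \psi\bigl(\trace_{\F^{(n)}/\F}(\alpha)\bigr)\, \chi\bigl(\norm_{\F^{(n)}/\F}(\alpha)\bigr), \]
where $\psi$ and $\chi$ range over additive and multiplicative characters. The trivial pair contributes the main term $(q^n-1)/(q(q-1)) = q^{n-2} + O(q^{n-3})$; mixed trivial/nontrivial pairs are handled by orthogonality and contribute at most $O(1)$; and when both $\psi$ and $\chi$ are nontrivial, $\psi \circ \trace$ and $\chi \circ \norm$ are a nontrivial additive and nontrivial multiplicative character of $\F^{(n)}$, so the inner sum is a classical Gauss sum of absolute value exactly $q^{n/2}$. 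Averaging yields $N_n(q;t,N) = q^{n-2} + E$ with $|E| \leq q^{n/2} + O(1)$.

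Finally, I would correct for the elements lying in proper subfields: their count is at most $\sum_{d \mid n,\, d < n}(q^d - 1) \leq 2q^{n/2}$ for $n \geq 4$. Dividing by $n$, the number of desired irreducible polynomials is at least $(q^{n-2} - 3q^{n/2} - O(1))/n$. This is positive whenever $q^{(n-4)/2}$ exceeds a small absolute constant, a condition comfortably implied by $q > ((n+1)/2)^2$ for every $n \geq 5$, proving part (1). The same estimate settles part (2) for all but a short list of exceptional pairs $(n,q)$ with $n \in \{6,7\}$ and very small $q$ (roughly $(6,2), (6,3), (7,2)$), which can be disposed of by direct enumeration, checking for each admissible $(t,N)$ that at least one of the irreducible polynomials counted by $I_n(q) = \tfrac{1}{n}\sum_{d \mid n}\mu(d)\,q^{n/d}$ has the required trace and norm.

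The main obstacle is not conceptual but numerical: keeping the constants in the Gauss-sum bound of size $q^{n/2}$ and the subfield correction tight enough, relative to the main term $q^{n-2}/n$, to reach precisely the threshold $q > ((n+1)/2)^2$ in part (1) and to reduce the residual list for part (2) to a handful of pairs that can be checked by hand.
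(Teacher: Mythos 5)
The paper does not prove this theorem at all: it simply cites Corollaries~2.2 and~2.3 of the reference on explicit generator-polynomial theorems and moves on. Your proposal, in contrast, reconstructs a complete proof from scratch via orthogonality of characters and the exact evaluation $|G| = q^{n/2}$ of the Gauss sums that arise when both characters are nontrivial. The outline is sound and is in fact the standard technique for prescribed-coefficient results of this kind (and almost certainly the method underlying the cited reference): the character-sum decomposition is set up correctly, the mixed trivial/nontrivial pairs do contribute only $O(1)$ after normalisation, the Gauss-sum term is bounded by $(q-1)(q-2)q^{n/2}/(q(q-1)) < q^{n/2}$, and the subfield correction $\sum_{d \mid n,\, d<n} q^d < 2q^{n/2}$ is fine for $n\ge 4$. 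Putting these together does give positivity exactly when $q^{(n-4)/2}$ clears a constant slightly above $3$, which is implied by $q>((n+1)/2)^2$ for all $n\ge 5$ (with equality of thresholds precisely at $n=5$), and leaves only $(n,q)\in\{(6,2),(6,3),(7,2)\}$ for part~(2) to be finished by enumeration — which matches your residual list. So your approach is correct and supplies an actual argument where the paper has none; the trade-off is that the citation buys the author sharper constants and a shorter paper, while your route is self-contained at the cost of a small hand-check and more careful bookkeeping of constants, a caveat you rightly flag.
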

\begin{proof}
  These are Corollaries 2.2 and 2.3 of \cite{explicitTheoremsOnGeneratorPolynomials}.
\end{proof}

\begin{proof}[Proof of Proposition \ref{propositionLocalSComputation}]
   For $l > 2$, the claim follows from the third part of Lemma \ref{lemmaFirstLocalObservations} and Lemma \ref{lemmaUComputation}, so let us consider $l=2$. 
Write $\F$ for the residue field of $F$ and pick a uniformiser $\pi$.

For $a \in 2 + \pi + \pi^2 \mathcal{O}$, the polynomial $f = X^2 - aX + 1$ is irreducible, since $f(X+1) = X^2 - (a-2)X + (2-a)$ is irreducible by Eisenstein's criterion. 
Hence, by the second point of Lemma \ref{lemmaFirstLocalObservations}, we have $2 + \pi + \pi^2 \mathcal{O} \subseteq S(A/F)$, so $V(A/F)$ contains the element $b = 2 + \pi$; likewise $-b \in V(A/F)$ by passing from $f$ to $f(-X)$.
Furthermore, we also have $V(A/F) \supseteq \operatorname{res}^{-1}(U_2(\F))$ by Lemma \ref{lemmaFirstLocalObservations} since the right-hand side is open. 

If $\F$ has cardinality greater than $11$, then $U_2(\F) - U_2(\F) = \F$ by \cite[Lemma 2.3]{poonenUniversalExistential}. 
We can check exhaustively that for the remaining finite fields we have $(U_2(\F) \cup \{ 2, -2 \}) - U_2(\F) = \F$. 
Hence \[ V(A/F) - V(A/F) \supseteq \big(\operatorname{res}^{-1}(U_2(\F)) \cup \{ b, -b \}\big) - \operatorname{res}^{-1}(U_2(\F)) = \mathcal{O}. \qedhere \]
\end{proof}
This proof is adapted from the proof of Proposition 2.3 in \cite{parkDefiningIntegers}. 
A modification is necessary because the set $V_v$ constructed there fails to be contained in the interior of $S(A/F)$, interfering with the application of approximation theorems later on.

For a central simple algebra $A$ of prime degree $l$ over a global field $L$, we define $T(A/L) = S(A/L)$ if $l > 2$ and $T(A/L) = S(A/L) - S(A/L)$ (the set of pairwise differences of elements of $S$) if $l = 2$.
\begin{proposition}\label{propositionTIsSemilocal}
   If $A/L$ splits at all real places of $L$ (which is always the case if $l \neq 2$ or $L$ is a global function field), then \[ T(A/L) = \bigcap_{\mathfrak q \in \Delta_{A/L}} \mathcal{O}_{\mathfrak q} \cap L, \] where $\Delta_{A/L}$ is the finite set of places of $L$ at which $A/L$ does not split.
\end{proposition}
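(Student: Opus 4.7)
The strategy is to combine the local-global principle of Proposition \ref{propositionLocalGlobalPrinciple} with the explicit local computations of Proposition \ref{propositionLocalSComputation}. A preliminary observation is that under the splitting hypothesis every place in $\Delta_{A/L}$ is non-archimedean: for $l>2$ no central division algebra of odd prime degree exists over $\mathbb{R}$ or $\mathbb{C}$, while for $l=2$ the real places are excluded by hypothesis and complex places are always split. Consequently, in Proposition \ref{propositionLocalGlobalPrinciple} only the finitely many $v \in \Delta_{A/L}$ impose non-trivial conditions, since $S(A \otimes L_v / L_v) = L_v$ at every split place by Lemma \ref{lemmaFirstLocalObservations}(1).

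For $l > 2$, Proposition \ref{propositionLocalSComputation}(1) gives $S(A \otimes L_v / L_v) = \mathcal{O}_v$ for each $v \in \Delta_{A/L}$, and substituting into the local-global principle immediately yields $T(A/L) = S(A/L) = \bigcap_{\mathfrak q \in \Delta_{A/L}} \mathcal{O}_{\mathfrak q} \cap L$. For $l = 2$, the inclusion $T(A/L) \subseteq \bigcap_{\mathfrak q \in \Delta_{A/L}} \mathcal{O}_{\mathfrak q} \cap L$ is similarly straightforward: Lemma \ref{lemmaFirstLocalObservations}(3) yields $S(A \otimes L_v / L_v) \subseteq \mathcal{O}_v$ at each $v \in \Delta_{A/L}$, and each $\mathcal{O}_v$ is closed under subtraction.

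The substantive step is the reverse inclusion in the $l=2$ case. Given $a \in \bigcap_{\mathfrak q \in \Delta_{A/L}} \mathcal{O}_{\mathfrak q} \cap L$, Proposition \ref{propositionLocalSComputation}(2) lets me write $a = b_v - c_v$ with $b_v, c_v \in V(A \otimes L_v / L_v)$ for each $v \in \Delta_{A/L}$. I would then use weak approximation on $L$ to pick $b \in L$ that is $v$-adically close to each $b_v$, and set $c := b - a$, so that $c$ is correspondingly close to each $c_v = b_v - a$. The crucial point — and the main obstacle I expect — is that each $V(A \otimes L_v / L_v)$ is \emph{open} in $L_v$ by definition; this openness is exactly what lets sufficient closeness in weak approximation force $b, c \in V(A \otimes L_v / L_v) \subseteq S(A \otimes L_v / L_v)$ at every $v \in \Delta_{A/L}$. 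Since the other places impose no constraint, Proposition \ref{propositionLocalGlobalPrinciple} places $b, c \in S(A/L)$, whence $a = b - c \in T(A/L)$. This appeal to openness is precisely the subtlety flagged in the remark following Proposition \ref{propositionLocalSComputation}, which is the reason $T$ must be defined via the interior $V$ rather than directly via $S$ when $l = 2$.
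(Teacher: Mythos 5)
Your proof is correct and follows essentially the same route as the paper's: both reduce the $l>2$ case to Propositions \ref{propositionLocalGlobalPrinciple} and \ref{propositionLocalSComputation}, and both handle the reverse inclusion for $l=2$ by decomposing $a$ locally as a difference of elements of the open sets $V(A\otimes L_v/L_v)$ and then using weak approximation (with openness of $V$ doing the work) to produce global elements of $S(A/L)$. The only cosmetic difference is that you approximate one summand $b_v$ and set $c=b-a$, while the paper approximates $a_{\mathfrak q}$ and verifies that both $a$ and $x+a$ land in $V$; these are the same argument.
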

\begin{proof}
  For $l > 2$, this is immediate from Propositions \ref{propositionLocalGlobalPrinciple} and \ref{propositionLocalSComputation}, so consider the case $l = 2$.
  The inclusion $\subseteq$ is clear from Proposition \ref{propositionLocalGlobalPrinciple} and Lemma \ref{lemmaFirstLocalObservations}, so let $x \in \bigcap_{\mathfrak q \in \Delta_{A/L}} \mathcal{O}_{\mathfrak q} \cap L$. For each $\mathfrak q \in \Delta_{A/L}$ we have $x \in \mathcal{O}_{\mathfrak q} = V(A/L_{\mathfrak q}) - V(A/L_{\mathfrak q})$ according to Proposition \ref{propositionLocalSComputation}, so pick $a_{\mathfrak q} \in V(A/L_{\mathfrak q})$ such that $x + a_{\mathfrak q} \in V(A/L_{\mathfrak q})$. Since $\Delta_{A/L}$ is finite and the $V(A/L_{\mathfrak q})$ are open, we can use weak approximation to find $a \in L$ such that $a, x+a \in V(A/L_{\mathfrak q}) \subseteq S(A/L_{\mathfrak q})$ for all $\mathfrak q \in \Delta(A/L)$, hence $a, x+a \in S(A/L)$ by Proposition \ref{propositionLocalGlobalPrinciple} and therefore $x \in T(A/L)$.
\end{proof}

For later use, we also record the following fact:
\begin{proposition}\label{propInseparableExtensionSplitsAllAlgebras}
  Assume $K$ is global field of characteristic $p > 0$ and $L/K$ is a finite inseparable extension. Then any central simple algebra $A/K$ of degree $p$ is split by $L$.
\end{proposition}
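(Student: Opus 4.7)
The plan is to reduce to showing that $K^{1/p}$, the unique purely inseparable extension of $K$ of degree $p$ inside a fixed algebraic closure, already splits $A$, and then to establish that splitting by a local--global argument. Both parts rely on the fact that a global function field of characteristic $p$ has transcendence degree $1$ over its prime field, so that $[K^{1/p}:K] = p$ and, more generally, $K$ admits only one purely inseparable extension of each degree $p^n$.

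For the first reduction, let $M = L \cap K^{\mathrm{sep}}$ be the maximal separable subextension of $L/K$; since $L/K$ is inseparable, $L/M$ is a nontrivial purely inseparable extension, and so $L$ contains some purely inseparable extension of $M$ of degree $p$. This extension must equal $M^{1/p}$, because $M$ is again a global function field and hence $[M^{1/p}:M] = p$; it follows that $L \supseteq M^{1/p} \supseteq K^{1/p}$. It now suffices to show that $K^{1/p}$ splits $A$, and by the Hasse--Brauer--Noether local--global principle applied to the global field $K^{1/p}$, this amounts to showing that $A \otimes_K K^{1/p}_w$ is split for every place $w$ of $K^{1/p}$. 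Since $K^{1/p}/K$ is purely inseparable, each place $v$ of $K$ has a unique place $w$ above it, and the heart of the argument is the local claim that $K^{1/p}_w$ is a degree-$p$ extension of $K_v$, canonically isomorphic to $K_v^{1/p}$.

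For the local claim I would use weak approximation to choose $c \in K$ with $v(c) = 1$; then $c \notin K_v^p$, and also $c \notin K^p$, which forces $K(c^{1/p}) = K^{1/p}$. It follows that $K^{1/p} \otimes_K K_v \simeq K_v[X]/(X^p - c)$ is a field of degree $p$, which must be $K_v^{1/p}$. With this identification, $[A \otimes_K K^{1/p}_w]$ is the restriction of $[A \otimes K_v] \in \mathrm{Br}(K_v)$ along a degree-$p$ extension; as this class has order dividing $p$, the standard invariant formula for local fields (valid for inseparable extensions as well) yields $[A \otimes_K K^{1/p}_w] = p \cdot [A \otimes K_v] = 0$. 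The main technical obstacle I expect is precisely the local claim that $K^{1/p} \otimes_K K_v$ is a field, since an arbitrary generator $c$ of $K^{1/p}/K$ may happen to be a $p$-th power in $K_v$; the resolution is to let the choice of $c$ depend on $v$, using the global availability of elements of $K$ with prescribed small valuation.
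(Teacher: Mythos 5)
Your reduction to showing that $K^{1/p}$ splits $A$ is essentially the same as the paper's (both pass through $M = L \cap K^{\mathrm{sep}}$ and use $[M^{1/p}:M]=p$), but from there the two proofs diverge. The paper simply cites the Albert--Jacobson theorem (Lemma~\ref{lemmaInseparableSplitting}), which says that over \emph{any} field $F$ of characteristic $p$, every central simple $F$-algebra of degree $p$ is split by $F^{1/p}$; this is a one-line finish. You instead restrict attention to global fields and argue via Hasse--Brauer--Noether: identify the completion of $K^{1/p}$ above $v$ with $K_v^{1/p}$ (your choice of a local uniformiser $c$ to make $X^p-c$ irreducible over $K_v$ is the right way to do this), and then invoke the local invariant formula $\mathrm{inv}_{L}\circ \mathrm{Res}_{L/K_v} = [L:K_v]\cdot \mathrm{inv}_{K_v}$ for $L=K_v^{1/p}$.

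That last step is where you should be careful. The claim ``the standard invariant formula is valid for inseparable extensions as well'' is \emph{true}, but it is not the version most textbooks prove: the usual cohomological arguments are phrased for separable (typically Galois) extensions, where the base-change map $\mathrm{Br}(K_v)\to\mathrm{Br}(L)$ coincides with cohomological restriction. For $L=K_v^{1/p}$ you need either a careful verification (e.g.\ unwind the definition of $\mathrm{inv}$ via unramified cohomology, using that $K_v^{1/p}/K_v$ is totally ramified of degree $p$ with the same residue field, so the valuation on $K_v^{1/p}$ restricts to $p\cdot v$ on $K_v^{\mathrm{ur}}$) or a reference that explicitly covers arbitrary finite extensions of complete discretely valued fields; and of course the fact you would be verifying locally is exactly a special case of the Albert--Jacobson theorem the paper cites. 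So your proof is correct in outline and would go through once that step is pinned down, but the paper's route is both shorter and more general (it requires no class field theory and works over any base field of characteristic $p$), whereas yours is tied to the global-field setting and pushes the real content into a citation that deserves more scrutiny than ``standard''.
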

\begin{proof}
  By replacing $K$ with the maximal separable subextension of $L/K$, we may assume that $L/K$ is a purely inseparable proper extension.
  Since $[K^{\nicefrac 1p} \colon K] = p$, we now necessarily have $L \supset K^{\nicefrac 1p}$. Hence the result follows from Lemma \ref{lemmaInseparableSplitting} below.
\end{proof}

\begin{lemma}[{\cite[Theorem 4.1.8]{jacobsonFinDimDivAlgs}}] \label{lemmaInseparableSplitting}
  Let $F$ be a field of characteristic $p > 0$ and $A/F$ be a central simple algebra of degree $p$. Then $A$ is split by the field $F^{\nicefrac 1p}$.
\end{lemma}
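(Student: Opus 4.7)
The plan is to reduce to the case where $A$ is a central division algebra of degree $p$, exhibit a purely inseparable subfield $F(y) \subseteq A$ of degree $p$ over $F$, and then conclude via Lemma \ref{lemmaSplittingFieldIffSubfield} that the larger field $F^{1/p}$ (which contains an isomorphic copy of $F(y)$) splits $A$. The reduction is automatic: since $p$ is prime, Wedderburn's theorem implies that $A$ is either split (in which case there is nothing to prove) or a central division algebra of degree $p$, so assume the latter.

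Given an element $y \in A \setminus F$ with $y^p \in F$, the remainder of the argument is short. By Lemma \ref{lemmaComputingNrdAndTrd}, the minimal polynomial of $y$ over $F$ has degree exactly $p$; it divides $X^p - y^p$, and must therefore equal it. Hence $F(y)$ is a purely inseparable extension of $F$ of degree $p$, and as an abstract field it embeds into $F^{1/p}$ by sending $y$ to $(y^p)^{1/p}$. Lemma \ref{lemmaSplittingFieldIffSubfield} then yields that $F(y) \subseteq A$ splits $A$, and so does any field containing $F(y)$, in particular $F^{1/p}$.

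The main obstacle is producing an inseparable element $y \in A \setminus F$ at all; a priori it is conceivable that every element of $A \setminus F$ is separable over $F$. The classical route, due to Albert, first establishes the stronger statement that every central simple algebra of prime degree $p$ in characteristic $p$ is a cyclic $p$-algebra, i.e.\ admits generators $u, v$ satisfying $u^p - u \in F$, $v^p \in F^\times$, and $vu = (u+1)v$. The element $v$ is then the required inseparable generator. Constructing this cyclic presentation---by analysing maximal subfields and invoking Artin--Schreier theory to produce a cyclic separable subfield, then using Skolem--Noether to realise its generator as a conjugate by some $v$, whose $p$-th power is necessarily central---is the technical content behind the cited theorem of Jacobson.
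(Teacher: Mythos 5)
The paper offers no proof of this lemma at all: it is stated with a direct citation to Jacobson's book and nothing more, so your sketch is not competing against an argument in the text but reconstructing what lies behind the reference. That reconstruction is structurally sound. The Wedderburn reduction to the division-algebra case is correct; and the closing step -- that a purely inseparable subfield $F(y)\subseteq A$ of degree $p$ splits $A$ by Lemma~\ref{lemmaSplittingFieldIffSubfield}, and that $F^{1/p}$, containing an isomorphic copy of $F(y)$, therefore also splits $A$ -- is exactly right and sits well with the toolkit the paper has already assembled. (One small citation slip: that $[F(y):F]=p$ for $y\in A\setminus F$ comes from the tower-law remark immediately following Lemma~\ref{lemmaComputingNrdAndTrd}, not from the lemma itself, and the inseparable $y$ you seek should of course be required non-central, which the cyclic presentation does deliver.)

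Where you should be more careful is in flagging what the deferred step really is. You invoke Albert's theorem that a central \emph{division} algebra (not an arbitrary central simple algebra, as you write) of degree $p$ over a field of characteristic $p$ is a cyclic $p$-algebra, and extract the inseparable generator $v$ from that presentation. This is correct, but cyclicity is a strictly stronger statement than the lemma: knowing only that $A$ is split by $F^{1/p}$ (equivalently, that $A$ contains a degree-$p$ purely inseparable subfield) does not by itself give you a cyclic separable maximal subfield, whereas the cyclic presentation immediately gives both. So your argument effectively replaces the paper's citation to Jacobson with a citation to a harder theorem of Albert that Jacobson's Theorem 4.1.8 is, in part, packaging. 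A leaner route, avoiding cyclicity entirely, is the Brauer-group identity $\operatorname{Br}(F^{1/p}/F)={}_p\operatorname{Br}(F)$ (Hochschild/Albert): since $A$ has index $p$ its exponent divides $p$, so $[A]\in{}_p\operatorname{Br}(F)$ and is killed upon restriction to $F^{1/p}$. Either way, the genuine content you correctly identified -- producing an inseparable element, or equivalently showing the class dies over $F^{1/p}$ -- remains an external input, just as in the paper.
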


\subsection{First-order definability}

For what is to follow we will need that $S(A/F)$ is existentially first-order definable (in the language of rings) in terms of structure constants of $A$, i.e.\ $S(A/F)$ needs to be diophantine uniformly over central simple algebras of some prime degree $l$ and uniformly over base fields.
(Recall that structure constants with respect to a basis $(X_i)_{1 \leq i \leq l^2}$ of $A/F$ are the constants $(a_{ijk})_{1 \leq i,j,k \leq l^2}$ in $F$ such that $X_i \cdot X_j = \sum_k a_{ijk} X_k$.) 
It suffices to prove the following lemma.
\begin{lemma}
  The functions $\Trd$ and $\Nrd$ are uniformly quantifier-freely definable, i.e.\ for fixed $l$ there exists a quantifier-free first-order formula $\tau$ in $l^6 + l^2 + 1$ variables such that if $F$ is a field, $(a_{ijk})_{1 \leq i,j,k \leq l^2}$ are structure constants of a central simple algebra $A/F$, $(b_i)_{1 \leq i \leq l^2}$ is the basis expansion of an element $x \in A$, and $c \in F$ is arbitrary, then $F \models \tau(\overline a, \overline b, c)$ if and only if $c = \Trd(x)$, and there is likewise such a formula for $\Nrd$ in place of $\Trd$.
\end{lemma}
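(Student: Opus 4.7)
My plan is to construct, for each fixed prime $l$, a quantifier-free formula defining $\Trd$ (and analogously $\Nrd$) as a function of the structure constants and basis coordinates, by splitting into two cases according to whether $l \cdot 1_F = 0$ in $F$. The starting point is the classical identity $\chi_{M(x)}(T) = p_x(T)^l$ of polynomials in $F[T]$, where $M(x) \in M_{l^2}(F)$ is the matrix of left multiplication by $x$ on $A$ in the basis $(X_i)$ (so $M(x)_{ij} = \sum_k b_k a_{kji}$, an explicit $\Z$-polynomial in $a_{ijk}, b_i$), and $p_x(T) = T^l - \Trd(x) T^{l-1} + \dotsb + (-1)^l \Nrd(x)$ is the reduced characteristic polynomial (cf.\ \cite{jacobsonBasicAlgebraII}). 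Hence the coefficients $s_0, \dotsc, s_{l^2 - 1}$ of $\chi_{M(x)}(T) = \det(T I - M(x))$ are themselves $\Z$-polynomial expressions in $a_{ijk}, b_i$.

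When $l \cdot 1_F \neq 0$, comparing the coefficient of $T^{l^2 - 1}$ on both sides of $p_x(T)^l = \chi_{M(x)}(T)$ yields $l \Trd(x) = \trace(M(x))$, so the linear equation $l c = \trace(M(x))$ uniquely determines $c = \Trd(x)$. Iterating the same comparison produces, for each $k$, a relation of the shape $s_{l^2-k} = (-1)^k l \, s_k(x) + Q_k(s_1(x), \dotsc, s_{k-1}(x))$ with $Q_k \in \Z[s_1, \dotsc, s_{k-1}]$, which allows one to solve successively for the elementary symmetric functions $s_k(x)$ of the roots of $p_x(T)$; the resulting expression for $\Nrd(x) = s_l(x)$ is $\Z[1/l^l]$-polynomial in the $s_j$, and after clearing $l$-denominators becomes an equation of the form $l^l c = N'(\overline s)$ with unique solution $c = \Nrd(x)$ whenever $l \cdot 1_F \neq 0$. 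In characteristic $l$, the Frobenius identity gives $p_x(T)^l = \sum_i c_i^l T^{il}$, where $c_i$ is the coefficient of $T^i$ in $p_x(T)$; so the coefficients of $\chi_{M(x)}(T)$ at degrees divisible by $l$ are $l$-th powers of the $c_i$ (and the rest are zero). Since Frobenius is injective on any field of characteristic $l$, each $c_i$ is then uniquely determined, giving in particular $\Trd(x)^l = (-1)^l s_{l^2 - l}$ and $\Nrd(x)^l = \det(M(x))$ with unique solutions for $\Trd(x), \Nrd(x) \in F$.

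The desired quantifier-free formula assembles both regimes via a Boolean disjunction, e.g.
\[ \tau_{\Trd}(\overline a, \overline b, c) \iff \bigl[l \cdot 1_F \neq 0 \wedge l c = \trace(M(x))\bigr] \vee \bigl[l \cdot 1_F = 0 \wedge c^l = (-1)^l s_{l^2 - l}\bigr], \]
and analogously for $\Nrd$ using $l^l c = N'(\overline s)$ and $c^l = \det(M(x))$. The two guards are mutually exclusive in every field, so exactly one clause is active in each $F$ and uniquely pins down $c$. The main obstacle is executing the Newton's-identity recursion for $\Nrd$ in the characteristic-coprime-to-$l$ case cleanly enough to obtain a single polynomial equation of the form $l^l c = N'(\overline s)$ after clearing denominators; this reduces to a finite, mechanical computation for each $l$, made tractable by the observation that each inductive step solves linearly for the next unknown $s_k(x)$ in terms of already-determined quantities (the prior $s_j(x)$ and the coefficients of $\chi_{M(x)}$), the leading coefficient always being $\pm l$.
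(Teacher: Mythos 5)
Your argument is essentially correct, but it takes a genuinely different route from the paper. The paper's proof is model-theoretic: it first notes that over an algebraically closed field every degree-$l$ central simple algebra is isomorphic to $M_{l \times l}$, so one can assert the existence of a base-change matrix bringing the $(a_{ijk})$ to the standard structure constants and read off $\Trd, \Nrd$ as matrix trace and determinant; this gives an \emph{existential} formula correct over ACF. By quantifier elimination for algebraically closed fields this formula may be replaced by an equivalent quantifier-free one, and because $\Trd$ and $\Nrd$ are compatible with the base change $F \hookrightarrow \overline{F}$ while quantifier-free formulas are absolute between $F$ and $\overline{F}$, the same quantifier-free formula works over every field. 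Your approach instead builds the quantifier-free formula by hand from the identity $\chi_{M(x)}(T) = p_x(T)^l$, splitting on whether $l\cdot 1_F$ is zero or not. Each approach has its merits: the paper's is very short, avoids any case analysis, and is completely uniform in presentation, but is not explicit (quantifier elimination is effective but unwieldy); yours yields explicit polynomial equations and is self-contained modulo the standard identity between the characteristic polynomial of the regular representation and the reduced characteristic polynomial.

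One small inaccuracy: your claim that the denominator appearing when solving the triangular system for $e_1, \dotsc, e_l$ is exactly $l^l$ is not obviously right (tracking the recursion, the exponent of $l$ in the denominator of $e_k$ grows faster than linearly in $k$). What is true, and all that is needed, is that the denominator is some fixed power $l^N$ depending only on $l$, since the recursion terminates after $l$ steps and each step introduces only powers of $l$. The formula $l^N c = N'(\overline{s})$ then uniquely pins down $c = \Nrd(x)$ in the regime $l \cdot 1_F \neq 0$. The characteristic-$l$ branch, using injectivity of the Frobenius, is correct as you state it.
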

\begin{proof}
  If $(a_{ijk})$ are the structure constants of $M_{l \times l}$ in the standard basis (i.e.\ the basis given by matrices with a single entry equal to $1$ and all other entries $0$), then the elements $(b_i)$ are precisely the entries of the matrix $x \in M_{l \times l}$, and hence reduced norm and trace, which agree with matrix determinant and trace, are given by polynomial functions of the $b_i$. 

  If $F$ is algebraically closed, then any central simple algebra of degree $l$ is isomorphic to $M_{l \times l}$, i.e.\ there exists a base change matrix that transforms the situation to the previous one. Hence we can find an existential formula $\tau$ that works over algebraically closed fields -- it asserts the existence of a base change transforming the $a_{ijk}$ into the structure constants with respect to the standard basis, and $c$ being the right polynomial function of the (transformed) $b_i$. By quantifier elimination, we can replace $\tau$ by a quantifier-free formula which is equivalent over algebraically closed fields.

  This formula $\tau$ in fact works for all fields. For this it suffices to note that $\Trd_{A/F}(x) = \Trd_{A \otimes \overline{F} / \overline{F}}(x \otimes 1)$ and likewise for the reduced norm, and $F \models \tau(\overline a, \overline b, c)$ if and only if $\overline{F} \models \tau(\overline a, \overline b, c)$ since $\tau$ is quantifier-free.
\end{proof}

Consequently, the set $T(A/L)$ is existentially first-order definable, uniformly over $A$ of some fixed prime degree $l$ and global fields $L$.
We can even require the defining formula to be positive and existential, since an inequality $x \neq 0$ may always be replaced by the equivalent $\exists y (x \cdot y = 1)$.

\begin{corollary}\label{corollaryOpDefinable}
  For any finite place $\mathfrak p$ of a global field $K$, the ring $\mathcal{O}_{\mathfrak p} \cap K$ is diophantine in $K$.
\end{corollary}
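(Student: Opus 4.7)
The plan is to use Proposition~\ref{propositionTIsSemilocal} to write $\mathcal{O}_\mathfrak p \cap K$ as an existentially definable union of sets $T(A/K)$.

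First, I aim to establish the set-theoretic identity
\[
  \mathcal{O}_\mathfrak p \cap K = \bigcup_A T(A/K),
\]
with $A$ ranging over central simple algebras over $K$ of some fixed prime degree $l$ that split at every real place and ramify at $\mathfrak p$. The inclusion $\subseteq$ is immediate from Proposition~\ref{propositionTIsSemilocal}, since $\mathfrak p \in \Delta_{A/K}$ forces $T(A/K) \subseteq \mathcal O_\mathfrak p \cap K$. For the reverse inclusion, given $x \in \mathcal O_\mathfrak p \cap K$, the set of finite places at which $x$ fails to be integral is finite and disjoint from $\{\mathfrak p\}$; by Hasse--Brauer--Noether I can then produce an $A$ of degree $l$ with $\Delta_{A/K} = \{\mathfrak p, \mathfrak q\}$ for some finite place $\mathfrak q \neq \mathfrak p$ at which $x$ is integral (assigning local invariants $1/l$ and $-1/l$, which sum to zero in $\Q/\Z$); such an $A$ is automatically split at every real place, and $x \in T(A/K)$.

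Second, I would express this union existentially. The remarks immediately preceding the corollary show that $T(A/K)$ is uniformly diophantine in the structure constants $(a_{ijk})$ of a CSA of degree $l$. The polynomial conditions that $(a_{ijk})$ defines a CSA of degree $l$, and that the resulting algebra is split at every real place (vacuous for $l > 2$), are likewise diophantine.

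The main obstacle is encoding ``$A$ is ramified at $\mathfrak p$'' diophantinely, since ramification is a priori a universal statement about the completion $A \otimes K_\mathfrak p$. I would sidestep this by quantifying only over a specific subfamily of cyclic algebras in which ramification at $\mathfrak p$ is built into the construction: fix a suitable element $\pi \in K$ with $v_\mathfrak p(\pi) = 1$ and parametrize by cyclic algebras of the form $(\pi, b)_l$ for $b \in K^\times$ (using an Artin--Schreier-type variant if $l$ equals the residue characteristic of $\mathfrak p$). A tame Hilbert-symbol computation identifies the set of $b \in K^\times$ for which $(\pi, b)_l$ is ramified at $\mathfrak p$ as a diophantine subset cut out by constants depending on $\mathfrak p$ (essentially a prescribed valuation and a non-$l$th-power residue condition), and by varying $b$ one can arrange the second ramified place of $(\pi, b)_l$ to be any prescribed finite place $\mathfrak q$, so the set-theoretic union from the first step is realised by a single existential formula in the language of rings with parameters depending on $\mathfrak p$.
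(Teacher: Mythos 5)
Your set-theoretic identity $\mathcal{O}_{\mathfrak p} \cap K = \bigcup_A T(A/K)$ is correct, but the route you take to make it diophantine has a genuine gap, and the paper avoids the whole difficulty with a simpler device. The paper fixes, once and for all, \emph{two} central simple algebras $A, A'$ of degree $l$ splitting at all real places, with $\Delta_{A/K} \cap \Delta_{A'/K} = \{\mathfrak p\}$ (achievable by prescribing Hasse invariants, e.g.\ $\Delta_{A/K} = \{\mathfrak p, \mathfrak q_1\}$, $\Delta_{A'/K} = \{\mathfrak p, \mathfrak q_2\}$), and uses the \emph{sum} $T(A/K) + T(A'/K) = \mathcal{O}_{\mathfrak p} \cap K$. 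The inclusion $\subseteq$ holds because both summands lie in $\mathcal{O}_{\mathfrak p} \cap K$ by Proposition~\ref{propositionTIsSemilocal}; conversely any $x \in \mathcal{O}_{\mathfrak p} \cap K$ decomposes via weak approximation, since $\Delta_{A/K}$ and $\Delta_{A'/K}$ share no place outside $\mathfrak p$. As the sum of two fixed diophantine sets is diophantine, there is no quantification over algebras at all and hence no ramification condition to encode.

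The gap in your version is the claim that $\{\,b \in K^\times : (\pi, b)_l \text{ is ramified at } \mathfrak p\,\}$ is diophantine. You characterise this as ``a prescribed valuation and a non-$l$-th-power residue condition,'' but both of those conditions refer to $v_{\mathfrak p}$: ``$v_{\mathfrak p}(b) = 0$'' and ``$\bar b \notin (\F_{\mathfrak p}^\times)^l$'' are statements about $b$ modulo $\mathfrak p$, and expressing them existentially in $K$ is exactly as hard as defining $\mathcal{O}_{\mathfrak p} \cap K$ itself --- the content of the corollary you are trying to prove. Moreover ``ramified at $\mathfrak p$'' is, on its face, the negation of an existential splitting condition, i.e.\ a universal statement, and converting such co-diophantine conditions into diophantine ones is the hard problem this paper is built to solve; you cannot simply assume it at this stage. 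Unless you produce an independent diophantine description of that set of $b$'s (not relying on $v_{\mathfrak p}$ or on the conclusion), the union you write down is not visibly definable by an existential formula, whereas the paper's sum of two fixed $T$-sets is immediately so.
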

\begin{proof}
  Take a prime number $l$, e.g.\ $l=2$, and pick two central simple algebras $A,A'/K$ of degree $l$ splitting at all real places of $K$ and such that $\Delta_{A/K} \cap \Delta_{A'/K} = \{ \mathfrak p \}$. This is always possible by the characterisation of the Brauer group of global fields by Hasse invariants, see \cite[Theorem 8.1.17]{NeukirchSchmidtWingberg}.
Then $T(A/K) + T(A'/K) = \mathcal{O}_{\mathfrak p} \cap K$, and this is positively existentially definable in $K$ with parameters.
\end{proof}
Of course, this result is far from new -- see for instance \cite[Theorem 4.4]{diophantineClassesOfHolomorphyRingsOfGlobalFields}. The proof is essentially the same as \cite[Remark 2.6]{poonenUniversalExistential}.

For later use, we also spell out a definability result for cyclic algebras. Recall that for a field $F$ and a cyclic extension $M/F$ of degree $l$ with a generator $\sigma$ of $\Gal(M/F)$ and an element $a \in F$ the cyclic algebra $(M,\sigma,a)$ is the $F$-algebra generated by $M$ and an element $y$ subject to the relations $y^l = a$ and $xy = y \sigma(x)$ for all $x \in M$. It is central simple of degree $l$.

\begin{lemma}\label{lemmaCyclicTDefinable}
  Let $K$ be a global field and $M/K$ a cyclic extension of prime degree $l$ with a generator $\sigma$ of $\Gal(M/K)$. Then there is a positive existential formula $\varphi_{M,\sigma}(a, x)$ in the language of rings, with parameters from $K$, such that in any finite extension $L/K$ and for any $a \in K$ the formula $\varphi_{M,\sigma}(a, \cdot)$ defines the set $T((M,\sigma,a) \otimes_K L / L)$.
\end{lemma}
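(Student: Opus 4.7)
The plan is to write down explicit structure constants for $A = (M,\sigma,a)$ as a $K$-algebra that are polynomial in $a$ with coefficients in $K$, and then to substitute them into the uniform positive existential formula for $T$ provided by the preceding discussion on definability of $\Trd$ and $\Nrd$ from structure constants. Base change to a finite extension $L/K$ preserves these structure constants (merely reinterpreting them in $L$ via $K \hookrightarrow L$), so one single formula works uniformly over all such $L$.

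Concretely, fix a primitive element $m \in M$ so that $1, m, \ldots, m^{l-1}$ is a $K$-basis of $M$, and take $\{m^i y^j : 0 \le i, j \le l-1\}$ as a $K$-basis of $A$. Every product of two basis vectors reduces to
\[ (m^i y^j)(m^{i'} y^{j'}) = m^i \cdot \sigma^{\pm j}(m^{i'}) \cdot y^{j+j'}, \]
where the sign in the exponent is determined by the convention $xy = y\sigma(x)$, the element $\sigma^{\pm j}(m^{i'}) \in M$ is then expanded in the basis $1, m, \ldots, m^{l-1}$ using the multiplication table of $M$ and the matrix of $\sigma$ (both fixed $K$-valued data depending only on $M$ and $\sigma$), and $y^{j+j'}$ is rewritten via $y^l = a$, introducing at most one factor of $a$. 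Consequently each structure constant of $A$ in this basis is a polynomial in $a$ whose coefficients lie in $K$ and depend only on $M$ and $\sigma$.

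After base change, $\{m^i y^j \otimes 1\}$ is an $L$-basis of $A \otimes_K L$, and multiplication is governed by exactly the same structure constants, now interpreted in $L$. Substituting these polynomial expressions (in the free variable $a$, with fixed parameters from $K$) into the positive existential defining formula for $S$ obtained from the uniform quantifier-free definability of $\Trd$ and $\Nrd$ yields a positive existential defining formula for $S((M,\sigma,a) \otimes_K L / L)$, and likewise for $T$, which is either $S$ or $S - S$ (both positive existential operations). This is the desired $\varphi_{M,\sigma}(a,x)$. There is no substantive obstacle: the argument amounts to verifying that the cyclic algebra construction admits manifestly $K$-rational, polynomially $a$-parametrised structure constants in a fixed $K$-basis, which the computation above makes transparent.
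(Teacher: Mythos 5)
Your proof is correct and follows the same route as the paper's: write the structure constants of $(M,\sigma,a)$ in a fixed $K$-basis as polynomials in $a$ with $K$-coefficients, observe they survive base change to $L$, and plug them into the uniform positive existential definition of $T$. You simply spell out the structure-constant computation that the paper leaves implicit.
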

\begin{proof}
  Fixing an irreducible polynomial over $K$ with splitting field $M$, we can write structure constants for $(M, \sigma, a)$ as polynomial expressions in $a$ with parameters from $K$. These are then also structure constants for $(M,\sigma,a) \otimes_K L / L$. Now we can use the positive existential definition of $T$.
\end{proof}

\section{An Interlude in Class Field Theory}

Fix a global field $K$ and $n > 1$. Also fix a prime $l \mid n$ for this section.

The entirety of this section is rather technical; we set up the necessary machinery from class field theory -- notably describing certain ideal groups $I_{\mathfrak m}$ and $H$, as well as field extensions $M_i/K$ -- which is needed for our main proofs in the next section, in particular the central Proposition \ref{propositionCriterionForProperExtension}.

Let us fix some notation. Write $\Sigma$ for the set of places of $K$. If $K$ is a number field, write $\Sigma_\infty \subset \Sigma$ for the set of archimedean places. If $K$ is a global function field, arbitrarily fix $\Sigma_\infty$ to be any finite non-empty subset of $\Sigma$. In either case, we call $\Sigma_\infty$ the set of places at infinity.

Let $\mathcal{O}_K$ be the ring of elements of $K$ integral at each place in $\Sigma \setminus \Sigma_\infty$; this is a Dedekind domain, and the prime ideals of $\mathcal{O}_K$ are in bijection to places in $\Sigma \setminus \Sigma_\infty$. 
This ring is the usual ring of integers in the number field case. In the case of function fields, $\mathcal{O}_K$ depends on the choice of $\Sigma_\infty$.

Write $I_{\mathcal{O}_K}$ for the group of fractional ideals of $\mathcal{O}_K$, $P_{\mathcal{O}_K}$ for the subgroup of principal fractional ideals, and $\operatorname{Cl}(\mathcal{O}_K) = I_{\mathcal{O}_K}/P_{\mathcal{O}_K}$. 
In the number field case, this is the usual ideal class group and well-known to be finite. 
In the function field case, this is not the usual divisor class group, since we are ignoring the places at infinity, but rather the $\Sigma_\infty$-class group in the sense of \cite[chapter 14]{rosenFunctionFields} -- essentially the divisor class group modulo the classes of prime divisors at infinity. It is finite by Corollary 2 to Proposition 14.1 ibid.

We now fix some field extensions of $K$ for later use.
Choose $k$ such that $l^k > \lvert\operatorname{Cl}(\mathcal{O}_K)\rvert \cdot n!$. Find an abelian extension $M/K$ with Galois group $\Gal(M/K) \cong (\Z/l\Z)^k$ and such that $M/K$ is completely split at all infinite places.
\begin{lemma}
  For any choice of $k$ we can find such $M$.
\end{lemma}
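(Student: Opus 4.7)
The plan is to apply class field theory. By the Artin reciprocity isomorphism for the idèle class group $C_K = \mathbb{A}_K^\times/K^\times$, finite abelian extensions of $K$ of exponent dividing $l$ which split completely at every $v \in \Sigma_\infty$ correspond to open subgroups $H \leq C_K$ of finite index containing both $C_K^l$ and the image $U_\infty$ of $\prod_{v \in \Sigma_\infty} K_v^\times$. To construct $M/K$ with $\Gal(M/K) \cong (\Z/l\Z)^k$ and the required splitting behaviour, it therefore suffices to show that the maximal such extension $N/K$ has infinite degree: then $\Gal(N/K)$ is a profinite $\F_l$-vector space of infinite dimension, and choosing any closed subspace $V$ of codimension $k$ yields $M = N^V$ as desired.

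I would compare $N$ with the larger extension $N_0/K$, namely the maximal abelian extension of $K$ of exponent $l$ (with no splitting condition). By construction $N \subseteq N_0$, and the surjection $\Gal(N_0/K) \twoheadrightarrow \Gal(N/K)$ has kernel generated by the decomposition subgroups at the places $v \in \Sigma_\infty$. Each such decomposition subgroup is a continuous quotient of the local exponent-$l$ abelianised Galois group, which by local class field theory is isomorphic to $K_v^\times/(K_v^\times)^l$. This last group is \emph{finite} at every place of $K$: the archimedean cases are immediate, while for non-archimedean $v$ one uses the structure $K_v^\times \cong \Z \times \mu \times U^{(1)}$, with $\mu$ a finite group of roots of unity and $U^{(1)}$ a finitely generated $\Z_p$-module for $p$ the residue characteristic. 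Since $\Sigma_\infty$ is finite, the kernel of $\Gal(N_0/K) \twoheadrightarrow \Gal(N/K)$ is finite, so the problem reduces to showing that $\Gal(N_0/K)$ itself is infinite.

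The infinitude of $\Gal(N_0/K)$ is a standard fact for any global field and any prime $l$. When $\kar K \neq l$, one passes through Kummer theory over $K(\zeta_l)$: the dimension of $\Gal(N_0/K)$ controls the dimension of a subspace of $K(\zeta_l)^\times/(K(\zeta_l)^\times)^l$ cut out by a Galois-theoretic condition, and the latter $\F_l$-vector space is infinite-dimensional because infinitely many distinct prime ideals of the ring of integers of $K(\zeta_l)$ produce $\F_l$-linearly independent classes. When $\kar K = l$, Artin-Schreier theory gives $\Gal(N_0/K) \cong \mathrm{Hom}(K/\wp(K), \F_l)$ with $\wp(x) = x^l - x$, and $K/\wp(K)$ is infinite because at any fixed place $v$, elements of pairwise distinct sufficiently negative $v$-adic valuations cannot lie in $\wp(K)$. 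The main technical obstacle is packaging these infinitude arguments uniformly across the different types of global fields and for all primes $l$ (including the equicharacteristic case); the class field theory reduction above is routine.
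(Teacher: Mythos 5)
The paper dispatches this lemma with a citation to the Grunwald--Wang theorem (noting the ``special case'' is avoided since the exponent is prime), plus a remark that one could alternatively use cyclotomic extensions over $\Q$ or Carlitz-module extensions over $\F_p(T)$. Your proposal is a more hands-on class field theory argument: reduce via the idele class group to showing that the maximal abelian exponent-$l$ extension $N/K$ completely split at $\Sigma_\infty$ is infinite over $K$, and then compare $N$ with the unconstrained maximal exponent-$l$ abelian extension $N_0$. The reduction itself is sound, as is the verification that $\Gal(N_0/K)$ is infinite.

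There is, however, a genuine gap in the comparison step. You claim that the kernel of $\Gal(N_0/K) \twoheadrightarrow \Gal(N/K)$ is finite because $K_v^\times/(K_v^\times)^l$ is finite at each non-archimedean $v$, relying on ``$U^{(1)}$ a finitely generated $\Z_p$-module.'' That is true only for local fields of mixed characteristic, i.e.\ finite extensions of $\Q_p$. When $K$ is a global function field of characteristic $p$, every completion $K_v \cong \F_q((t))$ is equicharacteristic and $U^{(1)} = 1 + t\F_q[[t]]$ is a free pro-$p$ $\Z_p$-module of countably infinite rank. Hence for $l = p$ --- a case the lemma must handle, since $l$ ranges over all prime divisors of $n$, which may include $\kar K$ --- the group $K_v^\times/(K_v^\times)^p$ contains $U^{(1)}/(U^{(1)})^p$ and is infinite. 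The kernel of $\Gal(N_0/K) \to \Gal(N/K)$ can therefore be infinite, and infinitude of $\Gal(N_0/K)$ no longer transfers to $\Gal(N/K)$.

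The conclusion is still true in that case, but needs a different argument: either invoke Grunwald--Wang as the paper does, or argue directly via Artin--Schreier theory --- for instance, fix a place $w \notin \Sigma_\infty$ and use Riemann--Roch to produce infinitely many $a \in K$ with pole only at $w$, of order coprime to $p$, and vanishing at every $v \in \Sigma_\infty$; these classes are independent in $K/\wp(K)$ by a valuation count at $w$, yet trivial in each $K_v/\wp(K_v)$ for $v \in \Sigma_\infty$ by Hensel's lemma. Your argument as written goes through without change for number fields (where $\Sigma_\infty$ is archimedean) and for function fields with $l \neq \kar K$; only the equicharacteristic case $l = \kar K$ needs to be patched.
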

\begin{proof}
  This follows from existence theorems in class field theory, e.g.\ the general version of the Grunwald-Wang Theorem \cite[Theorem 9.2.8]{NeukirchSchmidtWingberg}. (Note that we are never in what is called the ``special case'' there, since we are looking for an abelian extension whose Galois group has prime exponent.)

It is not hard to give an explicit argument in the present situation, using (the totally real part of) cyclotomic extensions in the number field case, and the analogous Carlitz module construction (see \cite[chapter 12]{rosenFunctionFields}) over a suitable subfield $\F_p(T) \subseteq K$ in the function field case.
\end{proof}

\begin{remark}
  This choice of a distinguished abelian extension of $K$ is already present in previous papers, in the special case $l = k = 2$; most notably in subsection 3.3 of \cite{parkDefiningIntegers}, a field extension $K(\sqrt a, \sqrt b)/K$ is chosen. Likewise, the modulus $8$ which appears throughout \cite{definingZinQ} can be retrospectively explained by an implicit choice of field extension $\Q(\sqrt 2, \sqrt{-1})/\Q$.
  The paper \cite{eisentraegerMorrison} independently from us transfers some of the ideas of \cite{parkDefiningIntegers} to the setting of global function fields. Note however that in our situation the analogy between function fields and number fields is more direct: We do not have to impose the condition that $M$ be linearly disjoint from the Hilbert class field of $K$ as in (the proof of) \cite[Lemma 3.19]{parkDefiningIntegers} -- a condition that \cite{eisentraegerMorrison} changes in the function field situation.
\end{remark}

Let us write $I_{\mathfrak m} \leq I_{\mathcal{O}_K}$ for the set of fractional ideals of $\mathcal{O}_K$ in which none of the prime ideals ramified in $M/K$ occur in numerator or denominator. 
Then we obtain the well-known \emph{Artin map} \[ I_{\mathfrak m} \to \Gal(M/K) \] as the unique homomorphism sending an unramified prime ideal to its Frobenius element. (In the function field case, note that since all infinite places are completely split in $M/K$, this map is induced by the Artin map on divisors.)
Write $H < I_{\mathfrak m}$ for the kernel of this map.
 
By the Chebotarev Density Theorem, the set of prime ideals (excluding those at infinity and ramified ones) mapping to a given element of $\Gal(M/K)$ -- put otherwise, in a given coset in $I_{\mathfrak m}/H$ -- has density $\frac{1}{\lvert\Gal(M/K)\rvert} = \frac{1}{\lvert I_{\mathfrak m} / H\rvert} = l^{-k}$. (Throughout, it does not matter whether we choose natural or Dirichlet density.)

By class field theory -- see e.g.\ \cite[X, §2]{langANT} for number fields and \cite[Theorem 9.23]{rosenFunctionFields} for function fields -- there exists a modulus or cycle $\mathfrak m = \sum_{\mathfrak p} n_{\mathfrak p} \mathfrak{p}$, a formal sum of places of $K$ ramified in $M$ with $n_{\mathfrak p} \geq 0$, such that $H$ contains the subgroup $P_{\mathfrak m} = \{ (a) \colon a \in U_{\mathfrak m} \}$, where
\[ U_{\mathfrak m} = \{ a \in K^\times \colon v_{\mathfrak p}(a - 1) \geq n_{\mathfrak p} \text{ for all ramified $\mathfrak p$} \} .\]
The quotient group $I_{\mathfrak m} / P_{\mathfrak m}$, a \emph{generalised ideal class group}, is finite.

Now choose subextensions $M_1, \dotsc, M_k$ with $\Gal(M_i/K) \cong \Z/l\Z$ such that $M$ is the composite of the $M_i$. Furthermore, fix a generator $\sigma_i$ of $\Gal(M_i/K)$ for each $i$.

The rest of this section consists of two lemmas needed in the proof of Proposition \ref{propositionCriterionForProperExtension}.

\begin{lemma}\label{lemmaNonsplitInK}
  Let $a \in K^\times$ such that $(a) \in I_{\mathfrak m}$ and $(a) \not\in H$. Then there exist a place $\mathfrak{p} \not\in \Sigma_\infty$ and an $M_i$ such that the algebra $(M_i, \sigma_i, a)$ is not split at $\mathfrak{p}$, and $a \not\in \mathcal{O}_{\mathfrak p}^\times$.
\end{lemma}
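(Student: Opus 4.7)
The plan is to unwind the hypothesis $(a) \notin H$ via the Artin map and then connect the resulting Frobenius data to Hasse invariants of the cyclic algebras $(M_i,\sigma_i,a)$.

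First I would use the direct product decomposition $\Gal(M/K)\cong \prod_{i=1}^k \Gal(M_i/K)$ induced by restriction. Writing $\phi\colon I_{\mathfrak m} \to \Gal(M/K)$ for the Artin map, the hypothesis $(a) \notin H$ says $\phi((a)) \neq 1$, so there is some index $i$ such that $\phi((a))|_{M_i} \neq 1$ in $\Gal(M_i/K) = \langle\sigma_i\rangle$. Fix this $i$ for the rest of the argument.

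Next I would decompose $(a)=\prod_{\mathfrak p} \mathfrak p^{v_{\mathfrak p}(a)}$ as a product over $\mathfrak p \notin \Sigma_\infty$; note that since $(a)\in I_{\mathfrak m}$, only primes unramified in $M/K$ (hence in $M_i/K$) appear. By multiplicativity of the Artin map, writing $\Frob_{\mathfrak p}|_{M_i}=\sigma_i^{j_{\mathfrak p}}$, the previous paragraph gives
\[ \sum_{\mathfrak p} v_{\mathfrak p}(a)\, j_{\mathfrak p} \not\equiv 0 \pmod l. \]
Hence there exists some prime $\mathfrak p \notin \Sigma_\infty$ (unramified in $M_i/K$) with $v_{\mathfrak p}(a)\, j_{\mathfrak p} \not\equiv 0 \pmod l$. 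In particular $l \nmid v_{\mathfrak p}(a)$, so $v_{\mathfrak p}(a)\neq 0$ and therefore $a \notin \mathcal{O}_{\mathfrak p}^\times$.

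Finally, I would invoke the standard formula for the local invariant of a cyclic algebra at an unramified place: $\mathrm{inv}_{\mathfrak p}(M_i,\sigma_i,a) = v_{\mathfrak p}(a)\, j_{\mathfrak p}/l \pmod{\Z}$ in $\tfrac{1}{l}\Z/\Z$. Since the numerator is nonzero modulo $l$, the invariant is nontrivial, so $(M_i,\sigma_i,a)$ does not split at $\mathfrak p$, as required. The only step requiring care is the bookkeeping for the Artin map on $I_{\mathfrak m}$ versus its projection to each $\Gal(M_i/K)$, and quoting the correct form of the Hasse-invariant formula for a cyclic algebra at an unramified prime; the rest is a straightforward calculation.
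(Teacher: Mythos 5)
Your proof is correct and hinges on the same core idea as the paper's argument, namely converting $(a)\notin H$ via the Artin map into the existence of a prime $\mathfrak p$ in the factorization of $(a)$ at which some $M_i$ has a nontrivial local degree, so that the cyclic algebra has nontrivial Hasse invariant. The only genuine difference is the order of the two choices and the local criterion invoked: you first fix $i$ with $\phi((a))|_{M_i}\neq 1$ and then read off a suitable $\mathfrak p$ from the nonvanishing of $\sum_{\mathfrak p} v_{\mathfrak p}(a)\,j_{\mathfrak p} \bmod l$, quoting the explicit local invariant formula $\mathrm{inv}_{\mathfrak p}(M_i,\sigma_i,a) = j_{\mathfrak p}\,v_{\mathfrak p}(a)/l$; the paper instead first produces a prime $\mathfrak p\notin H$ occurring in $(a)$ with multiplicity prime to $l$ (using that $I_{\mathfrak m}/H$ has exponent $l$), then chooses any $M_i$ in which $\mathfrak p$ is inert, and concludes non-splitting from the description of the norm group of an unramified degree-$l$ extension as the elements of $l$-divisible valuation, without ever writing down the Hasse invariant formula. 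Both routes are short; the paper's version is marginally more elementary in that it sidesteps the formula (which requires a careful sign/normalization convention), while yours makes the dependence on $i$ more explicit and localizes the nonvanishing cleanly to a single term of the sum because $\mathbb{Z}/l\mathbb{Z}$ is a field.
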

\begin{proof}
  The fractional ideal $(a)$ of $\mathcal{O}_K$ factors as a product of prime ideals of $K$ unramified in $M$ and not in $\Sigma_\infty$. 
  The group $I_{\mathfrak m} / H \cong \Gal(M/K) \cong (\Z/l\Z)^k$ has exponent $l$, hence there exists a prime ideal $\mathfrak p \not\in H$ that occurs in $(a)$ with multiplicity not divisible by $l$ since $(a) \not\in H$.

  The prime $\mathfrak p$ is not completely split in $M$ since $\mathfrak p \not\in H$, so there exists some $M_i$ in which $\mathfrak p$ is inert, i.e.\ the local extension $M_i K_{\mathfrak p} / K_{\mathfrak p}$ is unramified of degree $l$.
  Therefore the group of local norms $\norm_{M_i K_{\mathfrak p} / K_{\mathfrak p}}((M_i K_{\mathfrak p})^\times) \subseteq K_{\mathfrak p}^\times$ consists of the elements of $l$-divisible valuation; thus $a$ is not a local norm and therefore $(M_i, \sigma_i, a)$ is not split at $\mathfrak p$.
\end{proof}

\begin{lemma}\label{lemmaSplitInExtension}
  Let $P \subset \Sigma \setminus \Sigma_\infty$ be a set of places of density at least $\frac{1}{n!}$. Then there exists $a \in K^\times$ such that $(a) \in I_{\mathfrak m}$, $(a) \not\in H$ and all places $\mathfrak p \in \Sigma \setminus \Sigma_\infty$ with $a \not\in \mathcal{O}_{\mathfrak p}^\times$ are in $P$.
\end{lemma}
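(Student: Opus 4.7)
The plan is to construct $a$ as a generator of a principal fractional ideal whose support lies in $P$. First I would discard from $P$ the finitely many primes ramified in $M/K$ (the density is unaffected), so that every remaining prime automatically lies in $I_{\mathfrak m}$. The task then reduces to finding a finite subset $S \subset P$ and integer exponents $(v_{\mathfrak p})_{\mathfrak p \in S}$ such that the ideal $\prod_{\mathfrak p \in S} \mathfrak{p}^{v_{\mathfrak p}}$ is principal while its image in $I_{\mathfrak m}/H \cong \Gal(M/K)$ is nontrivial; any generator $a$ of such an ideal will then satisfy all three required properties.

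The crucial quantitative input comes from Chebotarev applied to $M/K$: each element of $\Gal(M/K) \cong (\Z/l\Z)^k$ is the Frobenius of a set of primes of density exactly $l^{-k}$. Since $P$ has density at least $1/n!$, the primes of $P$ must realise at least $l^k/n!$ distinct Frobenius elements. Let $G \leq \Gal(M/K)$ denote the subgroup generated by these Frobenii; then $\lvert G \rvert \geq l^k/n! > \lvert \operatorname{Cl}(\mathcal{O}_K) \rvert$, where the second inequality uses the choice $l^k > \lvert \operatorname{Cl}(\mathcal{O}_K)\rvert \cdot n!$ made at the start of the section.

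Next I would pick a finite subset $S \subset P$ large enough that the Frobenii of primes in $S$ still generate $G$, and consider the two homomorphisms $\phi \colon \Z^S \to \operatorname{Cl}(\mathcal{O}_K)$ sending $e_{\mathfrak p}$ to the class $[\mathfrak p]$, and $\psi \colon \Z^S \to G$ sending $e_{\mathfrak p}$ to $\Frob_{\mathfrak p}$. Since $[\Z^S : \ker \phi] \leq \lvert \operatorname{Cl}(\mathcal{O}_K) \rvert < \lvert G \rvert = [\Z^S : \ker \psi]$, we cannot have $\ker \phi \subseteq \ker \psi$, so any $v \in \ker \phi \setminus \ker \psi$ gives rise to a principal fractional ideal $(a) = \prod_{\mathfrak p} \mathfrak{p}^{v_{\mathfrak p}}$ supported on $S \subset P$ whose image in $I_{\mathfrak m}/H$ is nontrivial; a generator $a \in K^\times$ of this ideal satisfies the three required conditions.

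The hard part is conceptual rather than technical: one needs to recognise that the Chebotarev density theorem controls Frobenii but not classes modulo principal ideals, and that the obstruction coming from the class group can be swallowed by the explicit quantitative gap $l^k > \lvert \operatorname{Cl}(\mathcal{O}_K) \rvert \cdot n!$ chosen at the outset. Once this separation is recognised, the rest is a short index-counting argument in finitely generated abelian groups.
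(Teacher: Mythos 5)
Your argument is correct, and it arrives at the same conclusion via the same key numerical input ($l^k > \lvert \operatorname{Cl}(\mathcal{O}_K)\rvert \cdot n!$) together with Chebotarev, but the implementation is more elaborate than the paper's. After the shared preparatory step (discard the finitely many ramified primes), the paper observes that $P$ meets at least $\lvert \operatorname{Cl}(\mathcal{O}_K)\rvert + 1$ distinct cosets of $H$ in $I_{\mathfrak m}$; picking one representative prime from each, the pigeonhole principle produces two primes $\mathfrak p, \mathfrak p' \in P$ in the same ideal class but different $H$-cosets, so $\mathfrak p {\mathfrak p'}^{-1}$ is already the desired principal ideal. You instead build the free abelian group $\Z^S$ on a suitable finite $S \subset P$, compare indices of $\ker\phi$ (the class-group map) and $\ker\psi$ (the Artin/Frobenius map), and extract some $v \in \ker\phi \setminus \ker\psi$. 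This is sound — the counting of Frobenius values (equivalently, cosets of $H$) is identical, the inequality $[\Z^S : \ker\phi] \le \lvert\operatorname{Cl}(\mathcal{O}_K)\rvert < \lvert G\rvert = [\Z^S : \ker\psi]$ does force $\ker\phi \not\subseteq \ker\psi$, and the resulting ideal has the required support and Artin image. The trade-off is that the paper gets away with an $a$ supported on just two primes, via a one-line pigeonhole, whereas your construction allows $a$ to have arbitrarily complicated support and requires the extra step of choosing $S$ to generate $G$. For the purposes of Lemma~\ref{lemmaSplitInExtension} both outputs are equally good, so this is a legitimate, if slightly heavier, alternative.
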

\begin{proof}
  We may remove the finitely many places ramified in $M/K$ from $P$ without affecting the hypotheses.

  The set $P$ has density at least $\frac{1}{n!} > \lvert \operatorname{Cl}(\mathcal{O}_K) \rvert / \lvert I_{\mathfrak m} / H \rvert$. 
  Since the set of prime ideals in each coset in $I_{\mathfrak m}/H$ has density $1/ \lvert I_{\mathfrak m} / H \rvert$ as noted above, $P$ contains prime ideals from at least $\lvert \operatorname{Cl}(\mathcal{O}_K) \rvert + 1$ different cosets; thus we may pick $\mathfrak p, \mathfrak{p}' \in P$ in different classes in $I_{\mathfrak m} / H$ and in the same class in $\operatorname{Cl}(\mathcal{O}_K)$.

  Now $\mathfrak p {\mathfrak{p}'}^{-1}$ is a principal fractional ideal of $\mathcal{O}_K$; pick a generator $a$. By construction, this generator satisfies all of the requirements.
\end{proof}

\section{A diophantine criterion for proper extensions of global fields}

In this section we find an existential sentence that distinguishes the fixed global field $K$ from its finite extensions of degree $n$, see Theorem \ref{theoremExistentialFormulaForProperExtension}.

\begin{definition}
  Let $L/K$ be an extension of degree $n$ and $l \mid n$ a prime number. A prime ideal $\mathfrak p$ of $K$ is \emph{$l$-good} (for $L$) if it is unramified in $L$ and for all prime ideals $\mathfrak q$ of $L$ above $\mathfrak p$ the inertia degree $[\mathcal{O}_L/\mathfrak q : \mathcal{O}_K/\mathfrak p]$ is divisible by $l$.

  The prime number $l \mid n$ is \emph{admissible} (for $L$) if either
  \begin{itemize}
    \item $L/K$ is separable and the set of $l$-good prime ideals of $K$ has density at least $\frac 1 {n!}$, or
    \item $L/K$ is inseparable and $l = \kar K$.
  \end{itemize}
\end{definition}

\begin{lemma}\label{lemmaGoodness}
  For every $L/K$ of degree $n$ there exists an admissible $l \mid n$.
\end{lemma}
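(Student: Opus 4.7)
The plan is to dispatch the inseparable case immediately and then reduce the separable case, via the Galois closure and Chebotarev, to a purely group-theoretic statement which I would deduce from the Fein--Kantor--Schacher theorem.

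First, if $L/K$ is inseparable with $p = \kar K$, then $p$ divides the inseparable degree, which in turn divides $n$, so $p \mid n$ and $l = p$ is admissible by the definition. No further work is needed in that case.

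For the separable case, I would pass to the Galois closure $M$ of $L/K$, set $G = \Gal(M/K)$ and $H = \Gal(M/L)$, and work with the action of $G$ on the coset space $X = G/H$ of size $n$. This action is faithful (because $M$ is by definition the compositum of the conjugates of $L$, so $\bigcap_g gHg^{-1} = 1$) and transitive, embedding $G$ into $S_n$ and giving $|G| \leq n!$. The standard Galois-theoretic dictionary for unramified primes then identifies the $l$-goodness condition with a combinatorial property of Frobenius: for unramified $\mathfrak p$, the primes of $L$ above $\mathfrak p$ correspond to the orbits of $\Frob_{\mathfrak p}$ on $X$, with inertia degrees equal to orbit lengths, so $\mathfrak p$ is $l$-good iff every cycle of its Frobenius on $X$ has length divisible by $l$.

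Next I would apply the Chebotarev density theorem: the density of primes of $K$ whose Frobenius lies in a fixed non-empty conjugacy class $C \subset G$ equals $|C|/|G| \geq 1/|G| \geq 1/n!$. Consequently, it suffices to exhibit a single prime $l \mid n$ and a single element $\sigma \in G$ whose cycles on $X$ all have length divisible by $l$; the resulting density bound will then automatically exceed the required threshold $1/n!$.

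The hard part is producing such an element. Here I would invoke the theorem of Fein, Kantor and Schacher: any faithful transitive finite permutation group of degree $n > 1$ contains a fixed-point-free element $\sigma$ of prime power order $l^k$. For such $\sigma$, every cycle on $X$ has $l$-power length at least $2$, hence divisible by $l$, and $l \mid n$ follows by summing cycle lengths. This group-theoretic input is the main obstacle, since Fein--Kantor--Schacher rests on the classification of finite simple groups; I do not see an elementary substitute that yields the uniform density bound $1/n!$ in full generality (elementary arguments cover only special cases, e.g.\ $n$ prime via Cauchy's theorem, or $n$ a prime power via Sylow theory on the $G$-set $X$).
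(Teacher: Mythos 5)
Your proposal is correct and follows essentially the same route as the paper: dispatch the inseparable case via $\kar K$ dividing the inseparable degree, then in the separable case pass to the Galois closure, invoke Fein--Kantor--Schacher to produce a fixed-point-free element of prime power order $l^r$ with $l \mid n$, translate its fixed-point-freeness into the $l$-goodness of the primes with that Frobenius conjugacy class (the paper phrases this via inertia degrees in the closure, you phrase it via cycle lengths on $G/H$, which is equivalent), and conclude with the Chebotarev bound $\geq 1/|G| \geq 1/n!$.
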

\begin{proof}
  If $L/K$ is inseparable, then by basic field theory $\kar K \mid n$, so $l = \kar K$ is admissible. 
Let us now assume that $L/K$ is separable.

  Let $L'/K$ be the Galois hull of $L/K$, $G = \Gal(L'/K)$, $H = \Gal(L'/L) \lneq G$. Then $\lvert G \rvert \leq n!$. Let $g \in G$ of prime power order $l^r$ with $l \mid n$ such that no conjugate of $g$ is in $H$. 
Existence of such $g$ is assured by Theorem \ref{theoremFeinKantorSchacher} below: An element $g$ has no conjugate in $H$ if and only if $g$ has no fixed point in the left-multiplication action of $G$ on $\Omega = G/H$.
If $\mathfrak q$ is a prime ideal of $L'$ above an unramified ideal $\mathfrak p$ of $K$ such that $\Frob(\mathfrak q/\mathfrak p)$ is conjugate to $g$, then the inertia degree $f(\mathfrak q / \mathfrak p)$ is equal to $\operatorname{ord}(g) = l^r$, so for $\mathfrak q' = \mathfrak q \cap L$ we have $f(\mathfrak q' / \mathfrak p) \neq 1$ since $\Frob(\mathfrak q/\mathfrak p) \not\in H$, and $f(\mathfrak q' / \mathfrak p) \mid l^r$, hence $l \mid f(\mathfrak q'/\mathfrak p)$. 
The set of such prime ideals $\mathfrak p$ has density at least $\frac 1 {n!}$ by the Chebotarev Density Theorem.
\end{proof}

\begin{theorem}[Fein-Kantor-Schacher]\label{theoremFeinKantorSchacher}
  Let $G$ be a finite group acting transitively on a set $\Omega$ with $\lvert\Omega\rvert > 1$. Then there exists an element $g \in G$ of prime power order $l^r$, with $l \mid \lvert\Omega\rvert$, acting without fixed points on $\Omega$.
\end{theorem}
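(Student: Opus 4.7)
The plan is to combine an automatic observation with the classical Fein-Kantor-Schacher argument. First, I note that the divisibility condition $l \mid |\Omega|$ comes for free: if $g \in G$ has prime-power order $l^r$ and acts without fixed points on $\Omega$, then every orbit of $\langle g \rangle$ on $\Omega$ has size a divisor of $l^r$ strictly greater than $1$, hence divisible by $l$; summing over orbits gives $l \mid |\Omega|$. So it suffices to exhibit some element of prime-power order acting fixed-point-freely on $\Omega$.

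The Burnside/Jordan orbit-counting formula immediately produces \emph{some} fixed-point-free element: the average number of fixed points over $G$ equals the number of orbits, which is $1$ by transitivity, while the identity alone contributes $|\Omega| > 1$. However, such an element need not have prime-power order, and its prime-power-order powers need not be fixed-point-free---as illustrated by the permutation $(1,2,3)(4,5)$ acting on $\{1,\dotsc,5\}$, whose non-trivial prime-power-order powers $(1,3,2)$ and $(4,5)$ each have fixed points. So Jordan's lemma alone does not suffice.

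To produce a prime-power-order fixed-point-free element I would follow the original Fein-Kantor-Schacher approach. Pass to a minimal counterexample $(G, H)$ where $H$ is a point stabiliser. Using the standard reductions---replacing $(G,H)$ by $(G,K)$ for any proper intermediate $K$ with $H < K < G$ (a fixed-point-free element of prime-power order on $G/K$ has no conjugate in $K$, hence none in $H$, so it is fixed-point-free on $G/H$ as well) and factoring out the normal core of $H$ in $G$---one reduces to the case of a faithful primitive action. The O'Nan-Scott theorem then reduces further to the case of an almost simple group $G$ with non-abelian simple socle, and the claim is settled by traversing the finite simple groups case by case, using the classification together with detailed data on prime-power-order conjugacy classes and maximal subgroups to exhibit the required element in each instance.

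The main obstacle is precisely this final case analysis: it depends essentially on the classification of finite simple groups, separately addressing the alternating, classical, exceptional Lie type, and sporadic families, whereas the preceding reductions to the almost simple case are essentially formal. Given that this is the content of the cited Fein-Kantor-Schacher paper, I would present the theorem as a quotation rather than reproduce the case-by-case verification.
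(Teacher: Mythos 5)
Your proposal is correct and agrees with the paper's treatment: the paper likewise gives no proof, merely citing Fein--Kantor--Schacher and remarking that all known proofs rest on the classification of finite simple groups. Your supplementary observations (that $l \mid \lvert\Omega\rvert$ is automatic once a fixed-point-free element of prime-power order exists, and that Jordan's lemma alone is insufficient, with the $(1,2,3)(4,5)$ example) are accurate but not part of the paper, which simply quotes the theorem.
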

\begin{remark} The paper \cite{feinKantorSchacher}, in which this theorem was first proved, used it for a similar purpose as we do -- classifying relative Brauer groups $\operatorname{Br}(L/K)$ of global fields. 
There appears to be no known proof of this theorem that does not use the classification of finite simple groups. 
\end{remark}

\begin{proposition}\label{propositionCriterionForProperExtension}
  For a global field $L/K$ consider the following statement, which we call $(\dagger)^l_{L/K}$:
  \begin{quotation}
    There exists an element $a \in K^\times$ such that $(a) \in I_{\mathfrak m}$, $(a) \not\in H$ and for all $i$ both $a$ and $\frac{1}{a}$ are in $T((M_i, \sigma_i, a) \otimes_K L / L)$.
  \end{quotation}
  Then this statement is false for $L = K$, and it is true if $L/K$ is an extension of degree $n$ with $l$ admissible.
\end{proposition}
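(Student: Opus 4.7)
The plan is to prove both assertions using the local-global principle of Proposition~\ref{propositionTIsSemilocal}, applied to the cyclic algebras $A_i = (M_i,\sigma_i,a) \otimes_K L$, in tandem with Lemmas~\ref{lemmaNonsplitInK} and~\ref{lemmaSplitInExtension}. A preliminary observation I would make once: since $M/K$ (hence each $M_i/K$) is completely split at every infinite place, each $A_i$ splits at every archimedean place of $L$, so Proposition~\ref{propositionTIsSemilocal} is indeed applicable throughout.

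For the case $L = K$ I argue by contradiction. Suppose $a \in K^\times$ witnesses $(\dagger)^l_{K/K}$. By Lemma~\ref{lemmaNonsplitInK} there is a finite place $\mathfrak p$ of $K$ and some $i$ such that $(M_i,\sigma_i,a)$ is nonsplit at $\mathfrak p$ and $a \notin \mathcal{O}_{\mathfrak p}^\times$. Proposition~\ref{propositionTIsSemilocal} gives $T((M_i,\sigma_i,a)/K) \subseteq \mathcal{O}_{\mathfrak p} \cap K$, so the hypotheses $a, a^{-1} \in T((M_i,\sigma_i,a)/K)$ force $a \in \mathcal{O}_{\mathfrak p}^\times$, a contradiction.

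For the second part, the inseparable case is immediate: Proposition~\ref{propInseparableExtensionSplitsAllAlgebras} implies that every $A_i$ is split by $L$, so $T(A_i/L) = L$, and any $a$ furnished by Lemma~\ref{lemmaSplitInExtension} (applied, say, to $P = \Sigma \setminus \Sigma_\infty$, which has density $1$) satisfies the conditions trivially. For the separable case, I would let $P$ be the set of $l$-good primes, which by admissibility has density at least $1/n!$. Lemma~\ref{lemmaSplitInExtension} then provides $a \in K^\times$ with $(a) \in I_{\mathfrak m}$, $(a) \notin H$, and $a \in \mathcal{O}_{\mathfrak p}^\times$ for every finite place $\mathfrak p$ of $K$ not in $P$. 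By Proposition~\ref{propositionTIsSemilocal}, what remains is to show that $a \in \mathcal{O}_{\mathfrak Q}^\times$ for each $\mathfrak Q \in \Delta_{A_i/L}$.

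This last verification is the real content, and is where I expect the only genuine work to lie. Fix a finite place $\mathfrak Q$ of $L$ with $a \notin \mathcal{O}_{\mathfrak Q}^\times$ and let $\mathfrak p = \mathfrak Q \cap \mathcal{O}_K$. Then $a \notin \mathcal{O}_{\mathfrak p}^\times$, so by construction $\mathfrak p \in P$ is $l$-good; i.e.\ $\mathfrak p$ is unramified in $L$ and $l$ divides $f(\mathfrak Q/\mathfrak p) = [L_{\mathfrak Q} : K_{\mathfrak p}]$. Since $(M_i,\sigma_i,a) \otimes_K K_{\mathfrak p}$ is a central simple algebra of prime degree $l$ over a non-archimedean local field, a standard Brauer-group computation shows it is split by every extension of degree divisible by $l$; in particular it is split by $L_{\mathfrak Q}$. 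Therefore $\mathfrak Q \notin \Delta_{A_i/L}$, which is the contrapositive of what was needed and finishes the proof.
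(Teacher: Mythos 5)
Your proof is correct and follows essentially the same line as the paper's: Lemma~\ref{lemmaNonsplitInK} with Proposition~\ref{propositionTIsSemilocal} for $L=K$, Proposition~\ref{propInseparableExtensionSplitsAllAlgebras} for the inseparable case, and Lemma~\ref{lemmaSplitInExtension} combined with the local splitting criterion for the separable case. The only cosmetic difference is that you argue the separable case via the contrapositive (showing $\mathfrak Q \notin \Delta_{A_i/L}$ whenever $a \notin \mathcal{O}_{\mathfrak Q}^\times$), which is logically identical to the paper's direct verification.
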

\begin{proof}
  Let us first consider the case $L = K$, and assume there were $a$ as in the statement. By Lemma \ref{lemmaNonsplitInK} there exist an $M_i$ and a place $\mathfrak p \not\in \Sigma_\infty$ such that the algebra $(M_i, \sigma_i, a)$ is not split at $\mathfrak p$ and $a \not \in \mathcal{O}_{\mathfrak p}^\times$. Hence $a \not\in T((M_i, \sigma_i, a))^\times$ by Proposition \ref{propositionTIsSemilocal} in contradiction to our assumption on $a$.

  Now consider the case of a proper extension $L/K$ of degree $n$ with $l$ admissible. 
  If $L/K$ is inseparable and $l = \kar K$, then Proposition \ref{propInseparableExtensionSplitsAllAlgebras} implies that all algebras $(M_i, \sigma_i, a)$ are split over $L$, so any choice of $a$ will do as long as $(a) \in I_{\mathfrak m}$, $(a) \not\in H$. Such $a$ is afforded by Lemma \ref{lemmaSplitInExtension}.

  If $L/K$ is separable, let $P \subseteq \Sigma \setminus \Sigma_\infty$ be the set of $l$-good primes; it has density at least $\frac 1 {n!}$. Therefore Lemma \ref{lemmaSplitInExtension} is applicable, so we obtain $a \in K^\times$ such that $(a) \in I_{\mathfrak m}$, $(a) \not\in H$ and all places $\mathfrak p \in \Sigma \setminus \Sigma_\infty$ such that $a \not\in \mathcal{O}_{\mathfrak p}^\times$ are in $P$.
  We claim that $a$ is as desired, so we must show that \[ a, \frac{1}{a} \in T((M_i, \sigma_i, a) \otimes_K L / L) \] for all $i$. The algebras $(M_i, \sigma_i, a) \otimes_K L$ split at all infinite places of $L$ by construction of the $M_i$, so by Proposition \ref{propositionTIsSemilocal} it suffices to show that they split at all primes $\mathfrak q$ of $L$ above primes $\mathfrak p \in \Sigma \setminus \Sigma_\infty$ with $a \not\in \mathcal{O}_{\mathfrak p}^\times$.
  But all those $\mathfrak p$ are $l$-good, so $l \mid [L_{\mathfrak q} \colon K_{\mathfrak p}]$ and hence $L_{\mathfrak q}$ does split all $(M_i, \sigma_i, a)$ by the theory of central simple algebras over local fields.
\end{proof}

\begin{remark}\label{remarkOnlyIdealMatters}
  The element $a$ in the statement $(\dagger)^l_{L/K}$ can be multiplied by an arbitrary element of $\mathcal{O}_K^\times$, i.e.\ the statement is really one about the principal ideal $(a)$.
To see this, observe that $T$ is invariant under multiplication by $\mathcal{O}_K^\times$, and the local splitting behaviour of $(M_i, \sigma_i, a)$ at a prime $\mathfrak p$ unramified in $M/K$ only depends on the valuation $v_{\mathfrak p}(a)$, since the local norm group contains the local unit group for unramified extensions.
\end{remark}

For each class of ideals in the set $(I_{\mathfrak m}/P_{\mathfrak m}) \setminus (H/P_{\mathfrak m})$ that contains a principal (fractional) ideal, fix a representative principal ideal $(a_j)$ and a generator $a_j \in K^\times$ thereof. This is a finite list since $I_{\mathfrak m}/P_{\mathfrak m}$ is finite.
Thus every principal ideal in $I_{\mathfrak m} \setminus H$ has the form $(a_j b)$ for some $b \in U_{\mathfrak m}$ and one of the $a_j$.
Therefore, by Remark \ref{remarkOnlyIdealMatters}, we may rephrase the statement $(\dagger)^l_{L/K}$ as follows:
  \begin{quotation}
    For some $a_j$, there exists a $b \in U_{\mathfrak m}$ such that for all $i$ we have
    $a_j b, \frac{1}{a_j b} \in T((M_i, \sigma_i, a_j b) \otimes_K L / L)$.
  \end{quotation}

  This statement is of a very specific form; in fact, we will show that is equivalent to a certain system of polynomial equations $G_r(x_1, \dotsc, x_s, y_1, \dotsc, y_t) = 0$ having a solution in $K^s \times L^t$. We again adopt the viewpoint of first-order logic in phrasing and proving this expressibility result:

\begin{lemma}\label{lemmaCriterionIsFirstOrder}
  Consider the first-order language of pairs of rings, i.e.\ with signature $(+, \cdot, 0, 1, U)$, where $U$ is a unary predicate for a distinguished subring.
  There exists a positive existential sentence $\psi_{K,n,l}$ in this language, with parameters from $K$, such that the condition $(\dagger)^l_{L/K}$ from Proposition \ref{propositionCriterionForProperExtension} is expressed precisely by $(L,K) \models \psi_{K,n,l}$.
\end{lemma}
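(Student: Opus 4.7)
The plan is to translate the reformulated version of $(\dagger)^l_{L/K}$ displayed just above --- a finite disjunction over the $a_j$, of an existential quantification over $b \in U_{\mathfrak m}$, of a finite conjunction over $i$ of two membership conditions in $T$ --- directly into the positive existential fragment of the two-sorted language. The finite disjunction and conjunctions present no difficulty since both connectives are permitted, and the quantifier ``$\exists b \in K$'' becomes $\exists b\,(U(b) \land \ldots)$ in our signature.

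The first concrete task is to express ``$b \in U_{\mathfrak m}$'' positively existentially: this amounts to a finite conjunction of conditions $v_{\mathfrak p}(b - 1) \geq n_{\mathfrak p}$, one for each prime $\mathfrak p$ ramified in $M/K$. I would fix once and for all an element $\pi_{\mathfrak p} \in K$ of $\mathfrak p$-adic valuation one; then the condition is equivalent to the existence of $c \in \mathcal{O}_{\mathfrak p} \cap K$ with $b - 1 = \pi_{\mathfrak p}^{n_{\mathfrak p}} c$. By Corollary \ref{corollaryOpDefinable} the set $\mathcal{O}_{\mathfrak p} \cap K$ is positively existentially definable in $K$, so this is a positive existential condition on $b$ with parameters from $K$.

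The second task is to express, for each $i$ and each fixed $a_j$, the conditions $a_j b \in T((M_i, \sigma_i, a_j b) \otimes_K L/L)$ and $(a_j b)^{-1} \in T((M_i,\sigma_i, a_j b) \otimes_K L/L)$. By Lemma \ref{lemmaCyclicTDefinable} there is a positive existential formula $\varphi_{M_i, \sigma_i}(a, x)$ with parameters from $K$ that defines $T((M_i, \sigma_i, a) \otimes_K L/L)$ in $L$ for every $a \in K$; so $\varphi_{M_i, \sigma_i}(a_j b, a_j b)$ --- evaluated in $L$ --- handles the first condition, while $\exists c\,(a_j b \cdot c = 1 \land \varphi_{M_i, \sigma_i}(a_j b, c))$ with $c$ ranging over $L$ handles the second.

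Taking the conjunction over $i$, placing everything under the existential quantifier over $b$ (restricted via $U$), and finally taking the disjunction over the finitely many $a_j$, assembles the desired sentence $\psi_{K,n,l}$. There is no real obstacle here; the lemma is a bookkeeping exercise built on the substantive work of Lemma \ref{lemmaCyclicTDefinable} and Corollary \ref{corollaryOpDefinable}.
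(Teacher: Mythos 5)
Your proposal is correct and follows essentially the same route as the paper: both translate the reformulated version of $(\dagger)^l_{L/K}$ directly, using Corollary \ref{corollaryOpDefinable} to render $b \in U_{\mathfrak m}$ diophantine (with the internal quantifiers relativised to $U$) and Lemma \ref{lemmaCyclicTDefinable} for the conditions involving $T$. You spell out the reduction of $U_{\mathfrak m}$ to finitely many conditions on $\mathcal{O}_{\mathfrak p} \cap K$ slightly more explicitly than the paper does, but the substance is identical.
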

\begin{proof}
  We use the equivalent form of $(\dagger)^l_{L/K}$ introduced above.
  This statement is straightforwardly written as
  \[ \psi_{K,n,l} = \bigvee_j \exists b \Big( b \in U_{\mathfrak m} \wedge \bigwedge_i a_j b, \frac 1 {a_j b} \in T((M_i, \sigma_i, a_j b))  \Big),  \]
  where $b \in U_{\mathfrak m}$ can be phrased as a positive existential statement since $U_{\mathfrak m}$ is a diophantine subset of $K$ by Corollary \ref{corollaryOpDefinable}, and $a_j b \in T((M_i, \sigma_i, a_j b))$ can likewise be expressed by Lemma \ref{lemmaCyclicTDefinable}.
\end{proof}

\begin{theorem}\label{theoremExistentialFormulaForProperExtension}
  There exists a positive existential sentence $\psi_{K,n}$ in the language of pairs of rings, with parameters from $K$, such that $(K,K) \models \neg\psi_{K,n}$, but $(L,K) \models \psi_{K,n}$ for all extensions $L/K$ of degree $n$.
\end{theorem}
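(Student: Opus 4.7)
The proof will assemble the pieces already built up in the paper almost mechanically, so my plan is short. The key ingredients are Lemma \ref{lemmaCriterionIsFirstOrder} (which gives, for each prime $l$, a positive existential sentence $\psi_{K,n,l}$ expressing $(\dagger)^l_{L/K}$), Proposition \ref{propositionCriterionForProperExtension} (which tells us $(\dagger)^l_{K/K}$ always fails, while $(\dagger)^l_{L/K}$ holds whenever $l$ is admissible for a degree-$n$ extension $L/K$), and Lemma \ref{lemmaGoodness} (which guarantees at least one admissible prime $l \mid n$ for every degree-$n$ extension $L/K$).

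The plan is simply to take the disjunction over all prime divisors of $n$. Explicitly, I would define
\[ \psi_{K,n} \;=\; \bigvee_{\substack{l \text{ prime} \\ l \mid n}} \psi_{K,n,l}. \]
This is a finite disjunction (since $n$ has only finitely many prime divisors), and a finite disjunction of positive existential sentences is again positive existential, so $\psi_{K,n}$ lies in the required syntactic class, using only parameters from $K$.

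For the two required properties: first, Proposition \ref{propositionCriterionForProperExtension} says that $(\dagger)^l_{K/K}$ is false for every prime $l$, so $(K,K) \not\models \psi_{K,n,l}$ for every $l \mid n$, whence $(K,K) \not\models \psi_{K,n}$. Second, if $L/K$ is any extension of degree $n$, Lemma \ref{lemmaGoodness} supplies at least one admissible prime divisor $l$ of $n$, and then the second half of Proposition \ref{propositionCriterionForProperExtension} yields $(\dagger)^l_{L/K}$, hence $(L,K) \models \psi_{K,n,l}$, so $(L,K) \models \psi_{K,n}$.

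I do not foresee a genuine obstacle: all of the work has been done in Proposition \ref{propositionCriterionForProperExtension} and Lemmas \ref{lemmaGoodness} and \ref{lemmaCriterionIsFirstOrder}, and the only remaining point is the (syntactic) observation that positive existential formulas are closed under finite disjunction, which needs no comment beyond noting that disjunction is a permitted connective.
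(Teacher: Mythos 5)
Your proof is correct and matches the paper's proof exactly: both define $\psi_{K,n} = \bigvee_{l \mid n} \psi_{K,n,l}$ and deduce the two required properties from Proposition~\ref{propositionCriterionForProperExtension}, Lemma~\ref{lemmaCriterionIsFirstOrder}, and Lemma~\ref{lemmaGoodness}. You merely spell out the verification more explicitly than the paper does.
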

\begin{proof}
  Let $\psi_{K,n} = \bigvee_{l \mid n} \psi_{K,n,l}$. Now the statement is an immediate consequence of Proposition \ref{propositionCriterionForProperExtension}, Lemma \ref{lemmaCriterionIsFirstOrder} and Lemma \ref{lemmaGoodness}.
\end{proof}

\section{Proof of the main results}

\begin{lemma}\label{lemmaCriterionForNoRoot}
  Let $f \in K[X]$ be monic of degree $n > 1$, so $K[X]/(f)$ is a ring into which $K$ embeds canonically.
  Then $f$ has no root in $K$ if and only if either $(K[X]/(f), K) \models \psi_{K,n}$ or $f$ factors as $f = g \cdot h$ with $g,h \in K[X]$ of degree $< n - 1$ with both $g$ and $h$ not having a root in $K$.
\end{lemma}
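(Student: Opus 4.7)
The plan is to prove the biconditional by handling the two directions separately, with the forward direction splitting naturally on whether $f$ is irreducible or reducible, and the backward direction hinging on the fact that positive existential sentences are preserved under ring homomorphisms.

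For the forward direction, assume $f$ has no root in $K$. If $f$ is irreducible over $K$, then $L := K[X]/(f)$ is a field extension of degree $n$, so Theorem \ref{theoremExistentialFormulaForProperExtension} gives $(L, K) \models \psi_{K,n}$, yielding the first disjunct. If $f$ is reducible, write $f = g \cdot h$ with $g, h \in K[X]$ both non-constant. A root in $K$ of either $g$ or $h$ would be a root of $f$, so neither factor has a root in $K$; in particular, neither has degree $1$. Hence $\deg g, \deg h \geq 2$, and since $\deg g + \deg h = n$, both are at most $n - 2 < n - 1$, yielding the second disjunct.

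For the backward direction, suppose for contradiction that $f$ has a root $\alpha \in K$. The second disjunct immediately fails: any factorization $f = g \cdot h$ in $K[X]$ must have $\alpha$ as a root of $g$ or of $h$. For the first disjunct, observe that since $(X - \alpha) \supseteq (f)$ in $K[X]$, there is a surjective $K$-algebra homomorphism
\[ \pi \colon K[X]/(f) \to K[X]/(X - \alpha) = K, \qquad X \mapsto \alpha. \]
This $\pi$ is the identity on the distinguished subring $K$, so it induces a homomorphism of structures $(K[X]/(f), K) \to (K, K)$ in the language of pairs of rings, preserving both the predicate $U$ and all parameters from $K$. Since $\psi_{K,n}$ is a positive existential sentence (with parameters in $K$) and such sentences are preserved under homomorphisms of structures in first-order logic, $(K[X]/(f), K) \models \psi_{K,n}$ would imply $(K, K) \models \psi_{K,n}$, contradicting Theorem \ref{theoremExistentialFormulaForProperExtension}. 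Hence the first disjunct also fails, completing the contradiction.

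The main conceptual point — and the reason the lemma is true despite $K[X]/(f)$ potentially being a very wild ring when $f$ is reducible — is precisely the positivity and existential character of $\psi_{K,n}$, which ensures passage through quotient maps; no subtler analysis of the ring $K[X]/(f)$ is needed. The forward direction is essentially bookkeeping about degrees once irreducibility has been dispatched via the theorem.
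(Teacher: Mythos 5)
Your proof is correct and follows the same approach as the paper: the preservation of positive existential sentences under the surjection $K[X]/(f) \to K$ induced by a root, combined with Theorem \ref{theoremExistentialFormulaForProperExtension} in the irreducible case and degree bookkeeping in the reducible case. The only difference is cosmetic (which implication you label as the forward direction).
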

\begin{proof}
  Assume that $f$ has a root $x \in K$. Then $K[X]/(X-x) \cong K$ is a homomorphic image of $K[X]/(f)$ preserving $K$. Since $(K,K) \models \neg\psi_{K,n}$ by Theorem \ref{theoremExistentialFormulaForProperExtension}, we obtain $(K[X]/(f),K) \not\models \psi_{K,n}$, since truth of positive existential sentences is preserved under taking homomorphic images. Furthermore $f$ cannot factor as a product of two polynomials in $K[X]$ without roots in $K$.

  For the converse direction, assume that $f$ has no root in $K$. Then either $f$ can be written as a product of two polynomials in $K[X]$ without roots (and therefore each of degree $> 1$), or $f$ is irreducible. In the latter case $K[X]/(f)$ is a field of degree $n$ over $K$, so $(K[X]/(f), K) \models \psi_{K,n}$ by Theorem \ref{theoremExistentialFormulaForProperExtension}.
\end{proof}

Now we can prove our main theorem.
\begin{theorem}\label{theoremMainThm}
  There exists an existential first-order formula $\varphi_{K,n}(a_0, \dotsc, a_{n-1})$ in the language of rings, with parameters in $K$, such that $K \models \varphi_{K,n}(a_0, \dotsc, a_{n-1})$ if and only if the polynomial $f = X^n + a_{n-1}X^{n-1} + \dotsb + a_0$ has no root in $K$.
\end{theorem}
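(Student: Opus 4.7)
The plan is induction on $n$, using Lemma \ref{lemmaCriterionForNoRoot} as the recursion step. The base case $n = 1$ is trivial: $f = X + a_0$ has root $-a_0 \in K$, so we set $\varphi_{K,1}$ to be the unsatisfiable atomic formula $0 = 1$. For $n > 1$, Lemma \ref{lemmaCriterionForNoRoot} says that $f$ has no root in $K$ precisely when either (a) $(K[X]/(f), K) \models \psi_{K,n}$, or (b) $f$ factors as $f = gh$ with $g, h \in K[X]$ monic of degrees $d_g, d_h$ satisfying $d_g + d_h = n$ and $2 \leq d_g, d_h \leq n-2$, both without roots in $K$. We will translate each disjunct into a positive existential formula in $(a_0, \dotsc, a_{n-1})$ over $K$ and take their disjunction.

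For disjunct (a), I would interpret the ring $R = K[X]/(f)$ in $K$ parametrically in the $a_i$: an element of $R$ is encoded by its coefficient vector $(u_0, \dotsc, u_{n-1}) \in K^n$ in the basis $1, X, \dotsc, X^{n-1}$, addition is coordinatewise, and the multiplication table is given by polynomial expressions in the $u_i$, $v_j$ and $a_k$ obtained by iterating the rewriting rule $X^n = -a_{n-1}X^{n-1} - \dotsb - a_0$. The subring predicate $U$ singling out $K \subseteq R$ translates to the conjunction that the last $n-1$ coordinates vanish; parameters from $K$ appearing in $\psi_{K,n}$ embed as $(c, 0, \dotsc, 0)$. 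Since $\psi_{K,n}$ is positive existential, replacing each quantifier $\exists x$ over $R$ by $n$ quantifiers over $K$ and replacing ring operations and the subring predicate as above yields a positive existential formula $\tilde\psi_{K,n}(a_0, \dotsc, a_{n-1})$ in the language of rings that holds in $K$ precisely when $(K[X]/(f), K) \models \psi_{K,n}$.

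For disjunct (b), I would disjoin over the finitely many pairs $(d_g, d_h)$ with $d_g + d_h = n$ and $2 \leq d_g, d_h \leq n-2$; for each, existentially quantify monic coefficients $b_0, \dotsc, b_{d_g - 1}$ and $c_0, \dotsc, c_{d_h - 1}$ and assert both the polynomial identity $f = gh$ (which becomes a system of polynomial equations in $a_i, b_j, c_k$ by comparing coefficients) and the inductive formulas $\varphi_{K, d_g}(b_0, \dotsc, b_{d_g - 1})$ and $\varphi_{K, d_h}(c_0, \dotsc, c_{d_h - 1})$. Taking $\varphi_{K,n}$ to be the disjunction of $\tilde\psi_{K,n}$ with this expression completes the inductive step, since each recursive call involves $d_g, d_h < n$.

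The only conceptual point is the interpretation of $(K[X]/(f), K)$ in $K$ in the first step; once one observes that multiplication in $K[X]/(f)$ is given by polynomial formulas in the coefficients $(a_0, \dotsc, a_{n-1})$ and that $\psi_{K,n}$ is positive existential (so that this interpretation pulls truth back to a positive existential formula in $K$), the rest is routine bookkeeping over the finite recursion tree.
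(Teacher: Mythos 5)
Your proposal is correct and takes essentially the same approach as the paper: interpret the pair $(K[X]/(f), K)$ quantifier-freely in $K$ with the $a_i$ as parameters, translate $\psi_{K,n}$ through this interpretation, and recurse over factorisations into lower-degree rootless factors using Lemma \ref{lemmaCriterionForNoRoot}. The paper spells out the cases $n=2,3,4$ and then states the general inductive step informally, whereas you give the uniform recursion with explicit base case and degree bookkeeping; the content is the same.
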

\begin{proof}
  We translate the equivalent statement from Lemma \ref{lemmaCriterionForNoRoot}. Note that the structure $(K[X]/(f), K)$ is quantifier-freely definable in $K$, since elements of $K[X]/(f)$ correspond in a straightforward way to $n$-tuples of elements of $K$, and the definitions of addition, multiplication, and the distinguished subset $K$ are immediate.

  Hence we obtain $\varphi_{K,2}(a_0,a_1)$ by rewriting $\big(K[X]/(X^2 + a_1X + a_0), K\big) \models \psi_{K,2}$ as a statement about $K$ and similarly $\varphi_{K,3}(a_0,a_1,a_2)$, since polynomials of degree at most $3$ cannot factor as polynomials of smaller degrees without roots.
  For $\varphi_{K,4}$, we have to allow for the possibility of a reducible polynomial of degree $4$ without roots, so we rewrite the statement
  \begin{multline*} \Big((K[X] / (X^4 + a_3X^3 + X^2a_2 + Xa_1 + x_0), K) \models \psi_{K,4}\Big) \vee \\ \exists b_0,b_1, c_0,c_1 \Big( X^4 + a_3X^3 + a_2X^2 + a_1X + a_0 = (X^2 + b_1X + b_0)(X^2 + c_1X + c_0) \wedge \\ \varphi_{K,2}(b_0,b_1) \wedge \varphi_{K,2}(c_0,c_1) \Big) \end{multline*}
  as a first-order statement about $K$ to obtain $\varphi_{K,4}$; this is again correct by Lemma \ref{lemmaCriterionForNoRoot}.
  Inductively, we can construct $\varphi_{K,n}$ in this manner for all $n$.
\end{proof}

\begin{corollary}
    Let $K^{\ast\ast} \supseteq K^\ast$ be any two fields which are both elementary extensions of $K$. Then $K^\ast$ is relatively algebraically closed in $K^{\ast\ast}$.
\end{corollary}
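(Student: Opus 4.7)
The plan is to deduce this from Theorem \ref{theoremMainThm} via the standard fact that existential (positive existential) formulas are preserved under extensions of structures.

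Fix an element $\alpha \in K^{\ast\ast}$ that is algebraic over $K^\ast$. Let $f \in K^\ast[X]$ be its (monic) minimal polynomial, of some degree $n \geq 1$. I would like to conclude $n = 1$, which forces $\alpha \in K^\ast$. If $n = 1$ we are done, so assume $n \geq 2$ with the goal of reaching a contradiction. Let $(a_0, \dotsc, a_{n-1}) \in (K^\ast)^n$ be the non-leading coefficients of $f$.

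By Theorem \ref{theoremMainThm}, the (positive) existential formula $\varphi_{K,n}(a_0, \dotsc, a_{n-1})$ with parameters in $K$ satisfies, in $K$, the equivalence
\[ \varphi_{K,n}(a_0, \dotsc, a_{n-1}) \iff \neg \exists x \, (x^n + a_{n-1}x^{n-1} + \dotsb + a_0 = 0). \]
The universal closure of this equivalence is a first-order sentence in the language of rings with parameters from $K$, so by the hypothesis that $K^\ast$ and $K^{\ast\ast}$ are both elementary extensions of $K$, the same equivalence holds in $K^\ast$ and in $K^{\ast\ast}$ for all choices of tuples from those fields.

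Applying the equivalence in $K^{\ast\ast}$ to $(a_0, \dotsc, a_{n-1})$: since $\alpha \in K^{\ast\ast}$ is a root of $f$, the formula $\varphi_{K,n}(a_0, \dotsc, a_{n-1})$ is \emph{false} in $K^{\ast\ast}$. Now $\varphi_{K,n}$ is existential and $K^\ast$ is a subfield of $K^{\ast\ast}$ containing the parameters (all of which come from $K \subseteq K^\ast$); existential formulas are preserved under passing from a substructure to a superstructure, so if $\varphi_{K,n}(a_0, \dotsc, a_{n-1})$ held in $K^\ast$ it would hold in $K^{\ast\ast}$ as well. Contrapositively, $\varphi_{K,n}(a_0, \dotsc, a_{n-1})$ is false in $K^\ast$. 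Applying the equivalence in $K^\ast$, this means $f$ has a root in $K^\ast$. But $f$ is irreducible in $K^\ast[X]$, so the existence of a root in $K^\ast$ forces $n = 1$, contradicting $n \geq 2$.

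The substantive ingredient is Theorem \ref{theoremMainThm}; no further obstacles arise here, since the only other inputs are the preservation of existential formulas under extension and the definability of ``having a root'' by an explicit existential formula. The minor thing to keep straight is the direction of preservation (existential formulas go \emph{up} from $K^\ast$ to $K^{\ast\ast}$, so falsity transfers \emph{down}), which is exactly what produces the required contradiction.
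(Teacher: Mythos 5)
Your proof is correct and uses essentially the same ingredients as the paper's: the transfer of Theorem \ref{theoremMainThm} to $K^\ast$ and $K^{\ast\ast}$ via elementary equivalence, and the preservation of existential formulas from a substructure to a superstructure. The only cosmetic difference is that the paper argues directly (no root in $K^\ast$ implies $\varphi_{K,n}$ holds in $K^\ast$, hence in $K^{\ast\ast}$, hence no root in $K^{\ast\ast}$), whereas you run the contrapositive by starting from an algebraic element $\alpha \in K^{\ast\ast}$ and its minimal polynomial over $K^\ast$ — both are the same argument.
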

\begin{proof}
  Theorem \ref{theoremMainThm} is also true in $K^\ast$ and $K^{\ast\ast}$ -- with the same formulae $\varphi_{K,n}$ -- by first-order transfer.
  Let $f = X^n + a_{n-1}X^{n-1} + \dotsb a_0 \in K^{\ast}[X]$ be a polynomial without a root in $K^\ast$. Then $K^{\ast} \models \varphi_{K,n}(\mathbf a)$, therefore $K^{\ast\ast} \models \varphi_{K,n}(\mathbf a)$ (since $\varphi_{K,n}$ is an existential formula), whence $f$ does not have a root in $K^{\ast\ast}$ either. 
\end{proof}

\begin{corollary}
  There exists a diophantine criterion for a polynomial over $K$ in an arbitrary number of variables to be irreducible.
\end{corollary}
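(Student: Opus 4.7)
The plan is to deduce the corollary from Theorem~\ref{theoremMainThm} by an inductive argument, handling the univariate case first and then reducing the multivariate case to the univariate one. A preliminary observation: for any fixed $r, d$, reducibility of a polynomial $f \in K[X_1, \dotsc, X_r]$ of degree at most $d$ in each variable is manifestly diophantine, since one existentially quantifies over the (bounded-degree) coefficients of candidate non-trivial factors $g, h$ with $gh = f$. Thus the set of reducible polynomials is $\exists$-definable, and the substantive task is showing that its complement is also $\exists$-definable.

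For univariate $f \in K[X]$ of degree $n$, I would aim to prove that $f$ is irreducible if and only if $(K[X]/(f), K) \models \psi_{K,n}$, where $\psi_{K,n}$ is the positive-existential sentence of Lemma~\ref{lemmaCriterionIsFirstOrder}. The forward direction is immediate from Theorem~\ref{theoremExistentialFormulaForProperExtension}: when $f$ is irreducible of degree $n$, the ring $K[X]/(f)$ is a proper degree-$n$ field extension of $K$, so $\psi_{K,n}$ holds. For the converse, the case where $f$ has a root in $K$ is handled by positive-existential preservation under the homomorphism $K[X]/(f) \twoheadrightarrow K$, exactly as in the proof of Lemma~\ref{lemmaCriterionForNoRoot}, which contradicts $(K, K) \not\models \psi_{K,n}$ from Theorem~\ref{theoremExistentialFormulaForProperExtension}. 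The remaining case---where $f$ is reducible without a root in $K$, so $K[X]/(f) \cong \prod_i K[X]/(p_i^{e_i})$ with each $p_i$ irreducible of degree strictly between $1$ and $n$---is the crux, since $\psi_{K,n}$ may still hold on such a product. Resolving this looks to require a refinement of the class-field-theoretic data of Section~3: tightening the bound $l^k > \lvert\operatorname{Cl}(\mathcal{O}_K)\rvert \cdot n!$ and the choice of the distinguished abelian extension $M/K$, so that the resulting $\psi_{K,n}$ characterises proper degree-$n$ field extensions specifically, rather than merely distinguishing them from $K$ as in the current Proposition~\ref{propositionCriterionForProperExtension}.

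For the multivariate case, with a diophantine criterion for univariate irreducibility in hand, one proceeds by strong induction on the total degree of $f$. For each non-trivial factorization type $((d_i, e_i))_{i=1}^s$ with $\sum e_i d_i$ equal to the degree of $f$, the condition that $f$ factors as $\prod f_i^{e_i}$ with each $f_i$ irreducible of degree $d_i$ and the $f_i$ pairwise non-associate is diophantine by the inductive hypothesis; a disjunction over the finitely many such types captures reducibility, and a parallel induction handles irreducibility. Alternatively, a Hilbert-irreducibility-style specialization reduces directly to the univariate case: a polynomial $f(X_1, \dotsc, X_r)$ primitive over $K[X_2, \dotsc, X_r]$ is irreducible iff some specialization $f(X_1, c_2, \dotsc, c_r)$ is irreducible in $K[X_1]$, giving an extra existential quantifier over the $c_i$. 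The main obstacle I anticipate is in the univariate base case---refining $\psi_{K,n}$ as described to rule out reducible rootless $f$---while the multivariate reduction and inductive combinatorics are essentially routine.
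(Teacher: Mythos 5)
Your plan has a genuine gap which you yourself flag, and it misses the much simpler route the paper actually takes.

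The proposed univariate characterisation ``$f$ is irreducible if and only if $(K[X]/(f),K)\models\psi_{K,n}$'' fails in exactly the direction you identify as the crux. When $f$ is reducible without a root in $K$, the ring $K[X]/(f)$ surjects onto $K[X]/(p)$ for each irreducible factor $p$; if $p$ has degree $m$ with $1<m<n$, then $L=K[X]/(p)$ is a proper field extension of degree $m$, and by Lemma~\ref{lemmaGoodness} some $l\mid m$ is admissible for $L/K$ with the density of $l$-good primes at least $\frac{1}{m!}\geq\frac{1}{n!}$. If moreover $l\mid n$, Proposition~\ref{propositionCriterionForProperExtension} gives $(L,K)\models\psi_{K,n,l}$, hence $(L,K)\models\psi_{K,n}$, and one can readily lift a witness to the product $K[X]/(f)$ since the other factors impose no constraints beyond the one already satisfied. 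So $\psi_{K,n}$ is genuinely too coarse to detect irreducibility, and ``refining the class-field-theoretic data'' is not a fix you carry out, nor is it clear it can be done: the formula $\psi_{K,n}$ is positive existential, hence monotone under homomorphisms, and you would need it to fail on products of smaller field extensions while holding on degree-$n$ fields.

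The paper avoids this entirely, and also avoids your multivariate induction and Hilbert-irreducibility reduction (the latter has its own subtleties over global function fields). It derives the corollary abstractly from the \emph{previous} corollary -- relative algebraic closedness of $K^\ast$ in $K^{\ast\ast}$ for $K\preceq K^\ast\preceq K^{\ast\ast}$ -- via the {\L}o\'s--Tarski preservation theorem. Irreducibility of an $r$-variable polynomial of bounded degree is plainly $\forall$-definable; {\L}o\'s--Tarski says such a property is $\exists$-definable with parameters over $K$ if and only if it is preserved upward along extensions of elementary extensions of $K$; and relative algebraic closedness supplies exactly that preservation (an irreducible $f\in K^\ast[\mathbf X]$ factors over $\overline{K^\ast}$ into normalised irreducibles that stay irreducible over $\overline{K^{\ast\ast}}$, and any putative factorisation in $K^{\ast\ast}[\mathbf X]$ would have coefficients in $\overline{K^\ast}\cap K^{\ast\ast}=K^\ast$). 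This handles all $r$ and $d$ at once and needs no refinement of $\psi_{K,n}$. The trade-off is that the paper's argument is non-constructive where yours aims at an explicit formula, but as it stands yours does not go through.
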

\begin{proof}
  Irreducibility is expressible by a universal first-order formula, since $f$ being irreducible means that for all pairs of polynomials of smaller total degree, $f$ is not equal to their product. By the Łoś-Tarski Preservation Theorem of model theory (\cite[Corollary 5.4.5]{shorterModelTheory}), this property is expressible by an existential first-order formula with parameters if and only if for every $K^{\ast\ast} \supseteq K^\ast$ with $K^{\ast\ast}, K^\ast \succeq K$ every irreducible polynomial over $K^\ast$ remains irreducible over $K^{\ast\ast}$.

  This condition is a simple consequence of relative algebraic closedness: Consider an irreducible polynomial $f \in K^\ast[\mathbf X]$, and assume without loss of generality (after affine change of coordinates and rescaling) that $f$ has constant coefficient $1$. Then $f$ factors into irreducible factors $f_1, \dotsc, f_n \in \overline{K^\ast}[\mathbf X]$, each with constant coefficient $1$, and these factors remain irreducible in $\overline{K^{\ast\ast}}[\mathbf X]$. If $f$ factors non-trivially as $g \cdot h$ in $K^{\ast\ast}[\mathbf X]$, we may assume after rescaling that both $g$ and $h$ have constant coefficient $1$, so $g, h$ can be factored into products of the $f_i$ in $\overline{K^{\ast\ast}}[\mathbf X]$ since this is a unique factorisation domain. But then the coefficients of $g$ and $h$ are both in $\overline{K^\ast}$ and in $K^{\ast\ast}$, so they are in $K^\ast$, contradicting $f$ being irreducible.
\end{proof}

\bibliographystyle{amsalpha}
\bibliography{../Bibliography}

\end{document}